\newtheorem{theorem}{Theorem}[section]
\newtheorem{lemma}[theorem]{Lemma}
\newtheorem{metalemma}[theorem]{Meta Lemma}
\newtheorem{proposition}[theorem]{Proposition}
\newtheorem{corollary}[theorem]{Corollary}
\newtheorem{problem}[theorem]{Problem}
\newtheorem*{theorem**}{Theorem\theoremnum}
\newenvironment{theorem*}[1][]{%
  \edef\theoremnum{\if\relax\detokenize{#1}\relax\else~#1\fi}
  \begin{theorem**}
}{%
  \end{theorem**}
}  
\theoremstyle{definition}
\newtheorem{definition}[theorem]{Definition}
\newtheorem{example}[theorem]{Example}
\newtheoremstyle{case}{}{}{}{}{}{:}{ }{}
\theoremstyle{case}
\newcommand{\bel}[1]{\begin{equation}\label{#1}}
\newcommand{\ee}{\end{equation}}
\newcommand{\LBA}{\left\{\begin{array}}
\newcommand{\EAR}{\end{array}\right.}
\def\ovw{{\overline{w}}}
\def\CN{{\mathcal N}}
\def\WP{{\mathbf{WP}}}
\def\blfootnote{\xdef\@thefnmark{}\@footnotetext}
\newcommand{\gp}[1]{{\left\langle #1 \right\rangle}}
\newcommand{\gpr}[2]{{\left\langle #1 \mid #2 \right\rangle}}
\newcommand{\Set}[2]{\left\{\, #1 \;\middle|\; #2 \,\right\}}
\def\MN{{\mathbb{N}}}
\def\MZ{{\mathbb{Z}}}
\DeclareMathOperator{\Aut}{{Aut}}
\DeclareMathOperator{\Core}{{Core}}
\DeclareMathOperator{\Sch}{{Sch}}
\DeclareMathOperator{\first}{{first}}
\DeclareMathOperator{\last}{{last}}
\DeclareMathOperator{\val}{val}
\DeclareMathOperator{\rootSLP}{root}
\title{HNN extensions of free groups with equal associated subgroups of finite index:\\ polynomial time word problem}
\author{Hanwen Shen, Alexander Ushakov}
\date{September 2025}
\begin{document}

\maketitle

\begin{abstract}
Let $G=F\ast_\varphi t$ be an HNN extension of a free group $F$
with two equal associated normal subgroups $H_1 = H_2$ of finite index.
We prove that the word problem in $G$ is decidable in polynomial time.
This result extends to the case where the subgroups $H_1=H_2$
are not normal, provided that the isomorphism $\varphi:H_1\to H_2$
satisfies an additional condition described in Section \ref{se:normalizable}.
\\
\noindent
\textbf{Keywords.}
HNN extensions of free groups, word problem, complexity.

\noindent
\textbf{2020 Mathematics Subject Classification.} 
20F10, 68W30.
\end{abstract}

\section{Introduction}
\label{se:Introduction}

The study of computational problems in the theory of groups began
in the early twentieth century.
Two central themes in this area are decidability and computational complexity, 
that together shape our understanding of which problems can be solved 
algorithmically and how efficiently.
In his 1911 work \cite{Dehn:1911}, M. Dehn introduced 
three fundamental decision problems: 
the word problem, the conjugacy problem, and the isomorphism problem, 
that have since been central to the field. A significant result 
concerning decidability was established in the 1960s when P. Novikov \cite{Novikov:1955} and W. Boone \cite{booneWordProblem1959} demonstrated 
the existence of finitely presented groups for which the word problems 
are undecidable.
Nevertheless, for many important classes of 
groups, such as automatic groups, finitely generated linear groups, 
and finitely presented residually free groups, the word problem remains decidable.

The 1940s marked the introduction of HNN extensions by G. Higman, B. Neumann, 
and H. Neumann \cite{higmanEmbeddingTheoremsGroups1949}, providing a powerful 
tool for group embeddings and for constructing groups with special
algorithmic properties, where the word problem is typically decidable. 
Subsequent research in the 1970s, notably by C. Miller et al
\cite{Miller1}, further explored the computational complexity of  
HNN extensions of free groups. This led to the construction of Miller's machine, 
a group exhibiting a decidable word problem but an undecidable conjugacy problem.

\subsection{HNN extensions}
\label{se:HNN}

Let $G=\gpr{X}{R}$ be a group, $H_1,H_2\le G$ and $\varphi:H_1\rightarrow H_2$ 
be a group isomorphism.
The \emph{HNN extension} of $G$ relative to $\varphi$ is the group
denoted by $G\ast_\varphi t$,  given by the following presentation:
$$
G\ast_\varphi t = \gpr{X,t}{R,~ t^{-1}ht = \varphi(h),~ h\in H_1}.
$$
It is easy to see that if $H_1 = \gp{h_1,\ldots,h_k}$, then
$$
G\ast_\varphi t=
\gp{X,t \mid R,~ t^{-1}h_1t = \varphi(h_1),\ldots,t^{-1}h_kt = \varphi(h_k)}.
$$
For the group $G\ast_\varphi t$
\begin{itemize}
\item
the group $G$ is called the \emph{base group},
\item
$t$ is called the \emph{stable letter},
    \item
$H_1$ and $H_2$ are called the \emph{associated subgroups}.
\end{itemize}
Elements of $G\ast_{\varphi} t$ can be defined as alternating sequences
of the form
\begin{equation}\label{eq:w}
w =
w_0 t^{\varepsilon_1}
w_1 \dots 
w_{k-1} t^{\varepsilon_k} w_k,
\end{equation}
where $w_0,\dots,w_k$ are group words over the alphabet $X$ of $G$,
called \emph{syllables},
and $\varepsilon_i = \pm 1$.
The number $k$ is called the \emph{syllable length} of $w$.

We say that $w$ is \emph{$t$-reduced} if it is reduced and
does not involve the following subwords:
\begin{itemize}
\item
$t^{-1} w_i t$, where $w_i\in H_1$;
\item
$t w_i t^{-1}$, where $w_i\in H_2$.
\end{itemize}
Otherwise, we say that $w$ is not $t$-reduced.
If $w$ is not $t$-reduced, then it can be simplified as follows:
\begin{itemize}
\item
$t^{-1} w_i t$, where $w_i\in H_1$, can be replaced with $\varphi(w_i)$;
\item
$t w_i t^{-1}$, where $w_i\in H_2$, can be replaced with $\varphi^{-1}(w_i)$.
\end{itemize}
These operations are called \emph{$t$-reductions} (or \emph{Britton reductions}).
They do not change the corresponding group element and decrease
the syllable length of $w$.
Hence, in finitely many steps one obtains an equivalent $t$-reduced word.

\begin{lemma}[Britton's lemma, \cite{brittonWordProblem1963}]
$w=1$ in $G\ast_\varphi t$ and $k\ge 1$
$\ \ \Rightarrow\ \ w$ is not $t$-reduced.
\end{lemma}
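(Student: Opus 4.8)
I would prove the contrapositive: if $w$ as in \eqref{eq:w} is $t$-reduced and $k\ge 1$, then $w\ne 1$ in $G\ast_\varphi t$. The plan is the classical van der Waerden permutation argument, which builds a normal form and, at the same time, a representation of $G\ast_\varphi t$ that is faithful enough to detect nontriviality. First I would fix a right transversal $T_1$ of $H_1$ in $G$ and a right transversal $T_2$ of $H_2$ in $G$, each containing $1$, and let $\mathcal N$ be the set of \emph{normal forms}: sequences $(g_0,t^{\delta_1},s_1,\dots,t^{\delta_m},s_m)$ with $g_0\in G$, each $\delta_i=\pm 1$, each $s_i\in T_1$ when $\delta_i=-1$ and $s_i\in T_2$ when $\delta_i=+1$, and with no \emph{pinch}, i.e.\ no factor $t^{\delta}\,1\,t^{-\delta}$.

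Next I would define a left action of the generators on $\mathcal N$. Each $g\in G$ acts by $g_0\mapsto g g_0$, fixing the rest. The letter $t$ acts by prepending $t$, with a correction: if the leading block is $t^{-1}s_1\cdots$ and $g_0\in H_2$, then prepending $t$ creates a pinch, which I delete, replacing the leading element by $\varphi^{-1}(g_0)s_1\in G$ and dropping $t^{-1},s_1$; otherwise I write $g_0=h s$ with $h\in H_2$, $s\in T_2$, use the identity $th=\varphi^{-1}(h)t$, and output $(\varphi^{-1}(h),t^{+1},s,\dots)$. The action of $t^{-1}$ is the mirror image, with $H_1$, $T_1$ and $\varphi$ in place of $H_2$, $T_2$ and $\varphi^{-1}$. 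The two things to verify are: (i) each generator acts as a bijection of $\mathcal N$, and the actions of $t$ and $t^{-1}$ are mutually inverse; (ii) the images satisfy the relators $R$ of $G$ together with the stable-letter relations $t^{-1}ht=\varphi(h)$ for $h\in H_1$. Granting (i)--(ii), the defining presentation of $G\ast_\varphi t$ yields a homomorphism $\Phi\colon G\ast_\varphi t\to\operatorname{Sym}(\mathcal N)$.

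Finally I would apply $\Phi(w)$ to the trivial form $(1)$ and read $w$ from right to left. The group syllables only left-multiply the leading $G$-element, while each stable letter $t^{\varepsilon_i}$ either lengthens the form by one or triggers a pinch; a pinch when prepending $t^{\varepsilon_i}$ forces the current leading element to lie in $H_1$ (when $\varepsilon_i=-1$) or in $H_2$ (when $\varepsilon_i=+1$), and since the absorbed correction already lies in that subgroup, this happens exactly when $w_i$ does. But the $t$-reducedness of the subword $t^{\varepsilon_i}w_i t^{\varepsilon_{i+1}}$ excludes precisely these memberships. Hence no pinch ever occurs, $\Phi(w)\cdot(1)$ is a normal form of syllable length exactly $k\ge 1$, and this differs from the length-$0$ form $(1)$. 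Therefore $\Phi(w)\ne\mathrm{id}$, and $w\ne 1$.

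I expect the main obstacle to be the verification of (i)--(ii): making the $t^{\pm1}$-action genuinely well-defined on $\mathcal N$ and checking that it respects the stable-letter relation. All the delicacy sits in the leading-syllable case analysis -- detecting whether prepending $t^{\pm 1}$ creates a pinch and, if so, carrying out the $\varphi^{\pm1}$-substitution so that the output is again a pinch-free normal form, while also confirming $\Phi(t)\Phi(t^{-1})=\Phi(t^{-1})\Phi(t)=\mathrm{id}$ and $\Phi(t)^{-1}\Phi(h)\Phi(t)=\Phi(\varphi(h))$. This bookkeeping is the only nontrivial content; everything else is formal. As an alternative, one could realize $G\ast_\varphi t$ as the fundamental group of a one-vertex, one-edge graph of groups and invoke Bass--Serre theory: a $t$-reduced word of positive length is a non-backtracking edge path in the associated tree, so it moves the base vertex to a distinct vertex and cannot represent the identity.
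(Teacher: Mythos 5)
The paper does not prove this lemma at all: it is stated with a citation to Britton's 1963 paper and used as a black box, so there is no in-paper argument to compare against. Your proposal supplies the classical proof -- the van der Waerden permutation/normal-form argument -- and it is correct. The transversal conventions are the right ones for the relation $t^{-1}ht=\varphi(h)$ (coset representatives of $H_1$ after $t^{-1}$, of $H_2$ after $t$), the rewriting rules $th=\varphi^{-1}(h)t$ for $h\in H_2$ and $t^{-1}h=\varphi(h)t^{-1}$ for $h\in H_1$ are stated correctly, and the key bookkeeping point is handled: when $\Phi(w)$ is applied to the empty form right-to-left, the correction absorbed into the leading $G$-syllable after a $t^{\mp1}$ step lies in $H_2$ (resp.\ $H_1$), so a pinch at the next $t^{\pm1}$ occurs if and only if $w_i$ itself lies in the relevant associated subgroup, which is exactly what $t$-reducedness forbids; hence the resulting normal form has $t$-length $k\ge 1$ and $\Phi(w)\ne\mathrm{id}$. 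The only content you defer is the routine verification that the generator actions are well-defined bijections satisfying the relators (in particular $\Phi(t)\Phi(t^{-1})=\mathrm{id}$ and $\Phi(t)^{-1}\Phi(h)\Phi(t)=\Phi(\varphi(h))$ for $h\in H_1$); this is standard and presents no obstruction. Your Bass--Serre alternative is equally valid. Either route gives a complete proof where the paper simply imports the result.
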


\begin{corollary}
If the membership problem for $H_1$ and $H_2$ is decidable,
$\varphi$ and $\varphi^{-1}$ are computable, and the word problem
for $G$ is decidable,
then the word problem for $G\ast_\varphi t$ is decidable.
\end{corollary}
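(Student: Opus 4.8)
The plan is to give a decision algorithm for the word problem in $G\ast_\varphi t$ and show it terminates. Given an input word $w$ written as an alternating sequence of the form \eqref{eq:w} with syllable length $k$, I would repeatedly search for a $t$-reducible subword and apply a $t$-reduction, until no further reductions are possible, obtaining a $t$-reduced word $w'$ equal to $w$ in $G\ast_\varphi t$. Each individual $t$-reduction strictly decreases the syllable length, so this loop runs at most $k$ times and therefore terminates. Once a $t$-reduced word $w'$ is produced, Britton's lemma provides the decision criterion.

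The key steps, in order, are as follows. First I would explain how the hypotheses make a single $t$-reduction effective: to detect whether a subword $t^{-1}w_i t$ (respectively $tw_it^{-1}$) is $t$-reducible, one must decide whether the syllable $w_i$ represents an element of $H_1$ (respectively $H_2$), and this is exactly the decidability of the membership problem for $H_1$ and $H_2$; to carry out the replacement, one computes $\varphi(w_i)$ or $\varphi^{-1}(w_i)$, which is possible since $\varphi$ and $\varphi^{-1}$ are assumed computable. Merging the new syllable with its two neighbors only requires writing down a product of group-words over the alphabet of $G$, so each reduction step is effective. Second, after reaching the $t$-reduced word $w'$ of syllable length $k'$, I would split into two cases. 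If $k'\ge 1$, then by the contrapositive of Britton's lemma a $t$-reduced word with at least one stable letter cannot equal $1$, so the algorithm outputs $w\neq 1$. If $k'=0$, then $w'$ is a single syllable $w_0$, i.e.\ a group-word over the alphabet of $G$, and $w=1$ in $G\ast_\varphi t$ if and only if $w_0=1$ in $G$; this last equality is decidable precisely because the word problem for $G$ is assumed decidable.

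I would then assemble these observations into the conclusion: the algorithm always halts and correctly decides whether $w=1$, which is exactly decidability of the word problem for $G\ast_\varphi t$. The main obstacle — really the only substantive point beyond bookkeeping — is justifying the $k'=0$ case, namely that the base group $G$ embeds into $G\ast_\varphi t$ so that a single-syllable word equals $1$ in the extension if and only if it already equals $1$ in $G$. This is the standard fact that the natural map $G\to G\ast_\varphi t$ is injective, which follows from Britton's lemma (a nonidentity element of $G$ is $t$-reduced of syllable length $0$ and hence nontrivial in the extension); I would invoke it explicitly rather than leave it implicit, since without it the reduction to the word problem for $G$ would not be valid.
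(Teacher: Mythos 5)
Your proposal is correct and follows exactly the argument the paper intends: repeatedly apply $t$-reductions (each effective by the membership and computability hypotheses, each decreasing syllable length), then invoke Britton's lemma for $k'\ge 1$ and the word problem in $G$ for $k'=0$. Your explicit justification of the $k'=0$ case via the injectivity of $G\to G\ast_\varphi t$ is a careful touch the paper leaves implicit, but it does not change the approach.
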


Current state of knowledge regarding the computational properties of the word problem for HNN extensions of free groups can be summarized as follows.
\begin{itemize}
\item 
The word problem, when approached via Britton’s lemma \cite{brittonWordProblem1963}, has exponential-time complexity.
\item 
In the generic (typical) case, the conjugacy problem can be solved in 
polynomial time \cite{borovikGenericComplexityConjugacy2009, Weiss2015}.
\item 
For ascending HNN extensions (when one of the subgroups is the entire group $G$)
M. Lohrey \cite{lohreyComplexityWordProblems2023} established polynomial-time decidability using straight-line programs. 
\item 
N. Haubold and M. Lohrey \cite{hauboldCompressedWordProblems} also proved that the compressed word problem for an HNN-extension with A finite is polynomial time Turing-reducible to the compressed word problem for the base group H.
\item 
A special case with equal subgroups associated by the identity
isomorphism can be solved in polynomial time \cite{Waack:1990}.
\end{itemize}

The main computational challenge of Britton reduction is that a single 
reduction step can multiply the length of a word by a constant factor, 
potentially producing words of exponential length.
We address this issue by representing such exponentially long words using 
straight-line programs (reviewed in Section \ref{se:SLP}) that define 
\textbf{paths} in the subgroup graphs of 
$H_1$ and $H_2$ (reviewed in Section \ref{se:sbgp-graphs}).

\subsection{Our results}
\label{se:Contribution}

Throughout the paper $F$ denotes the free group $F(X)$ over a finite alphabet $X$
and $H$ denotes a finitely generated subgroup of $F$. 
The main contributions of this paper are summarized in the following theorems.

\begin{theorem*}[\ref{th:main-case1}]
Suppose that $H$ is a normal subgroup of $F$ of finite index
and let $\varphi:H\to H$ be an automorphism.
Then the word problem for the HNN extension $F\ast_\varphi t$
is decidable in polynomial time.
\end{theorem*}

Theorem \ref{th:main-case1} can be generalized to the case
where $H$ is a subgroup of $F$ of finite index and 
$\varphi$ can be restricted to an automorphism $\varphi:N\to N$
of a normal subgroup $N\unlhd F$ of finite index.
We call such $\varphi$ \emph{normalizable} in Section \ref{se:normalizable}.

\begin{theorem*}[\ref{th:main-case2}]
\label{thm:NonNormalNiceIsomorphismCase}
Suppose that $H$ is a subgroup of $F$ of finite index 
and let $\varphi:H\to H$ be a normalizable isomorphism.
Then the word problem for the HNN extension $F\ast_\varphi t$
is decidable in polynomial time.
\end{theorem*}

\subsection{Outline}

The paper is organized as follows. 
Section~\ref{se:Freegroups} introduces essential preliminaries of free groups and subgroup graphs.
In Section~\ref{se:SLP} we discuss the definition and basic properties 
of straight-line programs.
Section~\ref{se:case1} presents a polynomial-time algorithm for the word problem
in the HNN extension $F\ast_\varphi t$ 
with equal associated normal subgroups of finite index,
which establishes Theorem~\ref{th:main-case1}.
Section~\ref{se:normalizable} introduces the notion of a \emph{normalizable} 
isomorphism $\varphi \colon H \to H$ and presents a polynomial-time algorithm 
for the word problem in the HNN extension $F \ast_\varphi t$ 
with equal associated subgroups of finite index and normalizable $\varphi$, 
which establishes Theorem~\ref{th:main-case2}.

\subsection{Model of computation and internal data representation}
We assume that all computations are performed on a random access machine.
Data representation for words is discussed in Section \ref{se:data-words} 
and data representation for straight-line programs is discussed 
in Section \ref{se:data-slps}.

\section{Preliminaries: subgroup graphs}
\label{se:sbgp-graphs}

\subsection{Free groups and free monoids}
\label{se:Freegroups}

Recall that an alphabet $X=\{x_1,\dots,x_n\}$ is a set, 
whose elements are called \emph{symbols}.
For $x\in X$ define the symbol $x^{-1}$ called the \emph{inverse} of $x$,
define the set $X^{-}=\Set{x^{-1}}{x\in X}$,
and form a \emph{symmetrized} alphabet 
(group alphabet)
$X^{\pm}=X\cup X^{-}$.
We refer to elements of $X$ as \emph{positive letters} and 
elements of $X^{-}$ as \emph{negative letters}.
The operation ${}^{-1}$ defines an involution on the set $X^\pm$,
mapping each $x\in X$ to $x^{-1}\in X^{-1}$ and 
$x^{-1}\in X^{-1}$ back to $x\in X$.

A \emph{word} over the alphabet $X$ is a sequence of letters from $X$.
The empty sequence of letters (the \emph{empty word}) is denoted by $\varepsilon$.
In our notation for words, we omit commas between letters and simply
write $w=x_1\dots x_n$.
The set of all words over the alphabet $X$ is denoted by $X^\ast$.
The set $X^\ast$ equipped with the binary operation of concatenation
is a free monoid.

A \emph{group word} $w$ is a word over a group alphabet $X^\pm$.
We use the following notation for group words:
$$
w= x_1^{\varepsilon_1} \dots x_n^{\varepsilon_n}
$$
where $x_i\in X$ and $\varepsilon_i = \pm 1$.
We say that $w$ is \emph{reduced} if it does not contain any pair 
of consecutive inverse letters, that is, any subword of the form
$x x^{-1}$ or $x^{-1} x$.
Denote by $F(X)$ the set of all reduced group words over $X$.
Every word $w$ can be reduced by a process called \emph{reduction}
which successively removes occurrences of subwords of the form
$x x^{-1}$ or $x^{-1} x$
until no such subwords remain.
The result of reducing any word $w$
is uniquely defined, that is, it does not depend on a particular 
sequence of removals.
Denote by $\ovw$ the result of reducing $w$.
The set $F(X)$ equipped with the multiplication
operation $\cdot$ defined by
$$
u\cdot v = \overline{u\circ v}
$$
is a free group. 
In this paper we mainly consider group words, and for simplicity,
we refer to them as words.

\subsubsection{Data representation for words}
\label{se:data-words}

A positive letter $x_i$ of an alphabet $X=\{x_1,\dots,x_n\}$ is encoded
by $i\in\MZ$ and a negative letter $x_i^{-1}$ is encoded
by $-i\in\MZ$. A word $w=w(X)$ is encoded by a sequence of integers.

\subsection{Subgroup graph}

Here we review the definition of subgroup graphs and recall 
their basic properties.
We assume the reader is familiar with this material
and omit the proofs. All relevant proofs 
can be found in \cite{Kapovich_Miasnikov:2002}.

An \emph{$X$-digraph} $\Gamma$ is a tuple $(V,E^\pm,\mu,r)$, where 
\begin{itemize}
\item 
$(V,E^\pm)$ defines a directed graph,
\item 
$r\in V$ is a designated vertex, called the \emph{root},
\item 
$\mu:E^\pm\to X^{\pm}$ is an edge labeling function
(we often use notation $u\stackrel{x}{\to} v$ for an edge $e$
labeled with $\mu(e)=x\in X^\pm$ that starts at $u$ and leads to $v$).
\end{itemize}
Define 
$$E^+=\Set{e\in E^\pm}{\mu(e)\in X}
\ \mbox{ and }\ 
E^-=\Set{e\in E^\pm}{\mu(e)\in X^{-}}$$
called  the set of \emph{positive} and \emph{negative} edges respectively.
Clearly, $E^\pm = E^+ \sqcup E^-$.
We say that edges $e_1,e_2\in E^\pm$ are \emph{inverses} of each other if 
$$
e_1=u\stackrel{x}{\to} v
\ \mbox{ and }\ 
e_2=v\stackrel{x^{-1}}{\to} u,
$$
i.e., if they have the same endpoints, opposite direction,
and opposite labels,
in which case we write $e_2=e_1^{-1}$ and $e_1=e_2^{-1}$.
We say that the edges in $\Gamma$ are \emph{inversible}
if $\Gamma$ with every edge $e=u\stackrel{x}{\to} v$ 
contains its inverse. 
We say that $\Gamma$ is \emph{folded} if for every $v\in V$
and $x\in X^{\pm}$ there exists at most one edge starting from $v$
labeled with $x$.

For an edge $e=u\stackrel{x}{\to} v$ we denote 
its \emph{origin} $u$ by $o(e)$ and its \emph{terminus} $v$ by $t(e)$.
A \emph{path} $p$ in $\Gamma$ is a sequence of edges
$e_{1},\dots,e_{t}$ satisfying the following 
\emph{connectedness} condition:
$$
t(e_{s})=o(e_{s+1}),
$$
for every $s=1,\dots,t-1$.
The \emph{label} $\mu(p)$ of a path $p$ is the word
$$
\mu(p)=\mu(e_{1})\dots\mu(e_{t})\ \in\ (X^{\pm})^\ast.
$$
We say that $p$ is \emph{reduced} if it does not contain consecutive 
opposite edges $ee^{-1}$.
To reduce $p$ means to delete all pairs of consecutive 
opposite edges from $p$.
It is easy to show that the result of path-reduction is uniquely defined,
i.e., it does not depend on the sequence of reductions.

A \emph{circuit} in $\Gamma$ is a closed path from $r$ to $r$.
We say that $\Gamma$ is a \emph{core} graph 
if for every edge $e$ there exists a reduced circuit in $\Gamma$
containing $e$.
An $X$-digraph $\Gamma = (V,E^\pm,\mu,r)$ is called a \emph{subgroup graph} 
if it is a core graph, is folded and connected, and has inversible edges.

If $\Gamma$ is not folded, then there are distinct edges 
$e_1=v\stackrel{x}{\to} u_1$ and $e_2=v\stackrel{x}{\to} u_2$
with the same origin $v$ and the same label $x$.
Identifying the edges $e_1$ and $e_2$ (and vertices $u_1$ and $u_2$) 
defines a single folding step.
A sequence of foldings eventually terminates with a folded graph
because each folding step decreases the size of $\Gamma$.
It can be shown that the result does not depend on the specific 
sequence of foldings applied.
The folding can be performed in nearly linear time, see \cite{Touikan:2006}.

Folded graphs with inversible edges have the following important property: 
for any path $p$ we have
$$
p \mbox{ is a reduced path}
\ \ \Leftrightarrow\ \ 
\mu(p) \mbox{ is a reduced word.}
$$
We say that an $X$-digraph $\Gamma=(V,E^\pm,\mu,r)$
\emph{accepts} a word $w\in F(X)$ if
$\Gamma$ contains a path $p$ from $r$ to $r$ labeled with $w$.
The language of all accepted words is defined by
\begin{equation}\label{eq:L}
L(\Gamma) = L(\Gamma,r) = \Set{w\in F(X)}{\Gamma \mbox{ accepts } w}.
\end{equation}
It is easy to see that $L(\Gamma)$ is a subgroup of $F(X)$ when $\Gamma$ is a subgroup graph.

\subsection{A basis for $L(\Gamma)$}
\label{se:sbgp-basis}

Let $\Gamma=(V,E^\pm,\mu,r)$ be a subgroup graph.
In this section we outline a procedure for
finding a free basis for the subgroup $L(\Gamma)$.

Since $\Gamma$ is inversible, the set $E^+$ uniquely defines the set $E^-$.
Hence, we can regard each pair of edges $\{e,e^{-1}\}$
as a single edge traversable in both directions, reading the label 
$x$ going in one direction and $x^{-1}$ in the other. 
From this perspective
$(V,E^\pm)$ can be viewed as an undirected graph $(V,E)$, 
where the edges $E$ are uniquely defined by $E^+$. 
A path in $(V,E)$ is a sequence of edges 
$e_1,\dots,e_k$ from $E^+$,
where each edge is either traversed in the forward (direct) direction 
or in the inverse direction.

We say that $T\subseteq E^+$ defines a spanning tree in $\Gamma$
if $(V,T)$ is a tree as an undirected graph. For a vertex $v\in V$
let $[r,v]_T$ be the unique reduced path in $T$ from $r$ to $v$
and $\mu([r,v]_T)$ its label. 
For $e=u\stackrel{x}{\to} v\in E^+$ define the circuit 
\begin{equation}\label{eq:p_e}
p_e = [r,o(e)]_T \cdot e\cdot  [t(e),r]_T
\end{equation}
from $r$ to $r$ in $\Gamma$ and its label $w_e = \mu(p_e)$.
Clearly, $w_e=1$ if and only if $e\in T$.

\begin{proposition}[{{\cite[Lemma 6.1]{Kapovich_Miasnikov:2002}}}]
$L(\Gamma) = \gp{w_e \mid e\in E^+\setminus T}$.
\end{proposition}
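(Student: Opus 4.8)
The plan is to show the two containments $\gp{w_e \mid e \in E^+\setminus T} \subseteq L(\Gamma)$ and $L(\Gamma) \subseteq \gp{w_e \mid e \in E^+\setminus T}$ separately, and then to verify freeness of the generating set. For the first containment, note that each $w_e = \mu(p_e)$ is by construction the label of a circuit $p_e$ based at $r$, so by the definition in \eqref{eq:L} each $w_e$ lies in $L(\Gamma)$; since $L(\Gamma)$ is a subgroup (as $\Gamma$ is a subgroup graph), the entire subgroup it generates is contained in $L(\Gamma)$. This direction is routine.

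The reverse containment is the substantive step. Let $w \in L(\Gamma)$, so there is a path $p = e_1 \cdots e_t$ from $r$ to $r$ with $\mu(p) = w$ (where each $e_i \in E^\pm$). The idea I would use is the standard spanning-tree rewriting: insert, between consecutive edges $e_i$ and $e_{i+1}$, the round trip $[t(e_i),r]_T \cdot [r, t(e_i)]_T$, which traverses the tree path to the root and back and hence has trivial label in $F(X)$. After inserting such cancelling detours at every junction (and a tree path from $r$ to $o(e_1)$ at the start, which is empty since $o(e_1)=r$, and similarly at the end), one regroups the edge sequence so that each original edge $e_i$ is flanked by tree paths, turning it into a segment of the form $[r, o(e_i)]_T \cdot e_i \cdot [t(e_i), r]_T$. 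For an edge $e_i \in E^+$ this segment is exactly $p_{e_i}$, with label $w_{e_i}$; for $e_i \in E^-$, i.e.\ $e_i = e^{-1}$ with $e \in E^+$, the segment is $p_e^{-1}$ with label $w_e^{-1}$. Reading off labels and using that the inserted detours contribute trivially in $F(X)$, one obtains $w = \prod_i w_{e_i}^{\pm 1}$ as an element of $F(X)$, which exhibits $w$ as a product of the claimed generators and their inverses. Edges $e_i$ lying in $T$ contribute $w_{e_i}=1$ and may be dropped.

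The freeness claim (that the $w_e$ for $e \in E^+\setminus T$ form a free basis, not merely a generating set) is where one must be slightly careful, though it follows from a Nielsen-type argument intrinsic to this setup: each generator $w_e$ contains a distinguished occurrence of the label of its non-tree edge $e$ that no other generator $w_{e'}$ (nor any tree path) can produce, because $\Gamma$ is folded and the non-tree edges are in bijection with the generators. Concretely, writing $w_e = \mu([r,o(e)]_T)\,\mu(e)\,\mu([t(e),r]_T)$, the middle symbol corresponding to traversing $e$ survives reduction and marks $w_e$ uniquely, so no nonempty reduced word in the $w_e$ can collapse to the identity. I would cite that this yields a free basis, matching the rank $|E^+| - |T| = |E^+\setminus T|$ predicted by the Euler-characteristic count $\rank L(\Gamma) = |E| - |V| + 1$.

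The main obstacle is bookkeeping in the reverse containment: ensuring the inserted tree detours genuinely cancel in $F(X)$ and that the regrouping correctly produces $w_{e_i}^{\pm 1}$ for both positive and negative edges, including the degenerate cases where $o(e_1)=r$ or where $e_i \in T$ (giving the trivial generator). Since the proposition is quoted verbatim from \cite[Lemma 6.1]{Kapovich_Miasnikov:2002}, I would in practice cite that reference for the detailed verification rather than reproduce the full rewriting argument.
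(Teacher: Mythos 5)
Your argument is correct, but note that the paper itself supplies no proof of this proposition at all: it is stated as a quotation of \cite[Lemma 6.1]{Kapovich_Miasnikov:2002}, and the surrounding text explicitly says all proofs of this material are omitted and deferred to that reference. What you have written is precisely the standard spanning-tree argument from that source, and both containments are handled correctly, including the degenerate cases ($o(e_1)=r$, edges in $T$ contributing trivial generators, and negative edges giving $w_e^{-1}$). Two small remarks: the proposition as stated only asserts that the $w_e$ generate $L(\Gamma)$, so your freeness paragraph goes beyond what is claimed; and that paragraph, while pointing in the right direction, is the one place where the justification is only sketched --- the clean way to finish it is to observe that, because $\Gamma$ is folded, free reduction of the label of a path corresponds to reduction of the path itself, and a non-tree edge can never cancel against a tree edge, so every distinguished occurrence of $e$ survives in any reduced product of the $p_e^{\pm 1}$ with no adjacent inverse pair.
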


\subsection{Schreier graph}

Recall that a \emph{right coset} of a subgroup $H\le G$ is the set
$$
Hg = \Set{hg}{h\in H}.
$$
The collection of right cosets forms a partition of $G$.
The number of distinct cosets of $H$ in $G$
is called the \emph{index} of $H$ in $G$, denoted by $|G:H|$.

Consider a subgroup $H\le G$ of a group $G$ generated by
$x_1,\dots,x_n\in G$.
The \emph{Schreier graph} of $H$ with respect to a generating set
$X=\{x_1,\dots,x_n\}$ 
is an $X$-digraph $\Sch(H,X)=(V,E,\mu,1_H)$ defined by
$$
V=\Set{Hg}{g\in G}
\mbox{ and }
E=\Set{Hg\stackrel{x}{\to}Hgx}{g\in G,\ x\in X^\pm},
$$
with the designated root $1_H=H\cdot 1\in V$, where $1$ is the
identity in $G$.
By construction, $\Sch(H,X)$ is 
\begin{itemize}
\item 
folded and connected;
\item 
has inversible edges;
\item 
in general, it is not a core graph;
\item 
$|V|=[G:H]$;
\item 
$L(\Gamma) = H$.
\end{itemize}

For an $X$-digraph $\Gamma$ and $v\in V(\Gamma)$ define
the core $\Core(\Gamma,v)$
of $\Gamma$ with respect to $v$ as the subgraph
induced by all reduced paths from $v$ to $v$ in $\Gamma$.
It is easy to see that
$\Gamma' = \Core(\Gamma,r)$, where $\Gamma=(V,E,\mu,r)$, is a core graph
defining the same subgroup, i.e., $L(\Gamma)=L(\Gamma')$.

\begin{theorem}[{{\cite[Theorem 5.1, Theorem 5.2, and Definition 5.3]{Kapovich_Miasnikov:2002}}}]
If $H$ is a subgroup of $F(X)$, then there is a unique 
(up to an isomorphism) subgroup graph $\Gamma$ satisfying 
$L(\Gamma) = H$.
Denote this graph by $\Gamma_H$.
\end{theorem}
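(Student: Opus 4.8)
The plan is to prove both existence and uniqueness by routing everything through the Schreier graph $\Sch(H,X)$ and its core. For \emph{existence}, set $\Gamma_0=\Sch(H,X)$: by the properties listed above it is folded, connected, has inversible edges, and satisfies $L(\Gamma_0)=H$, but in general it is not a core graph. Passing to $\Gamma_H:=\Core(\Gamma_0,1_H)$ repairs this: the core construction retains foldedness, connectedness, and inversibility of edges, produces a core graph, and preserves the language since $L(\Gamma)=L(\Core(\Gamma,r))$. Hence $\Gamma_H$ is a subgroup graph with $L(\Gamma_H)=H$, which settles existence and simultaneously furnishes a canonical candidate against which to test uniqueness.

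For \emph{uniqueness}, let $\Gamma=(V,E^\pm,\mu,r)$ be an arbitrary subgroup graph with $L(\Gamma)=H$. I would build a label- and root-preserving graph morphism $\psi\colon\Gamma\to\Sch(H,X)$ by setting $\psi(v)=H\cdot\overline{\mu(p)}$, where $p$ is any reduced path from $r$ to $v$ (one exists because $\Gamma$ is connected). This is well defined: two reduced paths $p,p'$ from $r$ to $v$ yield a circuit $p\cdot(p')^{-1}$ at $r$ whose label lies in $L(\Gamma)=H$, so $\overline{\mu(p)}\,\overline{\mu(p')}^{-1}\in H$ and the two cosets coincide. One checks directly that $\psi$ sends an edge $u\stackrel{x}{\to}v$ to $H\overline{\mu(p_u)}\stackrel{x}{\to}H\overline{\mu(p_u)}x$, so $\psi$ respects labels, and $\psi(r)=1_H$.

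The core of the argument, and the step I expect to be the main obstacle, is showing that $\psi$ is an isomorphism onto $\Core(\Sch(H,X),1_H)=\Gamma_H$. For \emph{injectivity}, suppose $\psi(v)=\psi(v')$ with reduced paths $p\colon r\to v$ and $p'\colon r\to v'$; then the reduced word $w=\overline{\mu(p)\,\overline{\mu(p')}^{-1}}$ lies in $H=L(\Gamma)$, so $w$ labels a circuit at $r$. Reading $w$ and then $p'$ produces a path from $r$ to $v'$ whose reduced label equals $\overline{\mu(p)}$; invoking the folded-graph property that a reduced word labels a unique reduced path and that reducing a path preserves its endpoints, this path reduces to $p$, forcing $v'=v$. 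For \emph{surjectivity onto the core}, the inclusion $\psi(\Gamma)\subseteq\Core(\Sch(H,X),1_H)$ holds because $\Gamma$ is a core graph, while every edge of the core lies on a reduced circuit at $1_H$ whose label lies in $H=L(\Gamma)$ and is therefore realized by a circuit in $\Gamma$ mapping onto it.

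Since the target $\Gamma_H=\Core(\Sch(H,X),1_H)$ does not depend on the chosen $\Gamma$, it follows that any two subgroup graphs with language $H$ are isomorphic, which completes the proof. The delicate point throughout is the translation, via foldedness, of the statement ``the label of a path from $r$ lies in $H$'' into ``the path begins and ends at prescribed vertices''; getting this bookkeeping right for the unreduced concatenations that appear in the injectivity step is where the real work lies.
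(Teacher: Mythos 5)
Your proposal is correct and takes the same route as the paper, which simply identifies $\Gamma_H$ with $\Core(\Sch(H,X),1_H)$ and defers all details to Kapovich--Miasnikov; your existence argument is exactly that identification. The uniqueness half, which the paper omits entirely, is supplied correctly in your write-up: the coset map $\psi$ is well defined, label- and root-preserving, and your use of foldedness to match path reduction with word reduction is precisely the point needed to make the injectivity and surjectivity-onto-the-core steps go through.
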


\begin{proof}
In fact, $\Core(\Sch(H,X))$ is the required graph.
\end{proof}

\subsection{Subgroup graph homomorphism}

Let $\Gamma_i=(V_i,E_i,\mu_i,r_i)$ for $i=1,2$ be subgroup graphs.
Recall that a map $\varphi:V_1\to V_2$ 
is a \emph{subgroup graph homomorphism} if
\begin{itemize}
\item 
$\varphi(r_1)=r_2$;
\item 
$u\stackrel{x}{\to} v$ belongs to $E_1$
$\ \ \Leftrightarrow\ \ $
$\varphi(u)\stackrel{x}{\to} \varphi(v)$ belongs to $E_2$.
\end{itemize}

\begin{proposition}[{{\cite[Lemma 4.1 and Proposition 4.3]{Kapovich_Miasnikov:2002}}}]
$H_1\le H_2\ \Leftrightarrow\ $ there exists a subgroup graph homomorphism
$\varphi:\Gamma_{H_2}\to \Gamma_{H_1}$.
\end{proposition}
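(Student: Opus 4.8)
The plan is to prove the two implications separately, using throughout the identification $L(\Gamma_{H_i}) = H_i$ together with the basic property of folded graphs that a path is reduced exactly when its label is a reduced word. In particular, reduced circuits at the root of $\Gamma_{H_i}$ are in label-preserving bijection with the reduced words representing elements of $H_i$, so every containment or morphism statement can be translated into a statement about which circuit labels each graph realizes.

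For the implication from $H_1 \le H_2$ to the existence of a homomorphism $\varphi : \Gamma_{H_2} \to \Gamma_{H_1}$, I would define $\varphi$ by reading labels. Given a vertex $u$ of $\Gamma_{H_2}$, choose a reduced path $p$ from the root $r_2$ to $u$ and set $w = \mu(p)$; let $\varphi(u)$ be the vertex of $\Gamma_{H_1}$ reached by tracing $w$ from the root $r_1$. The crucial point is well-definedness: two reduced paths from $r_2$ to $u$ have labels differing by the label of a circuit of $\Gamma_{H_2}$, and the containment $H_1 \le H_2$ is exactly what guarantees that this discrepancy closes up again in $\Gamma_{H_1}$, so that the endpoint does not depend on the chosen path. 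By construction $\varphi(r_2) = r_1$, and one then checks the edge equivalence defining a subgroup graph homomorphism: the forward half is immediate from the way $\varphi$ follows labels, while the backward half uses that both graphs are folded, so an $x$-labeled edge between image vertices is witnessed by a unique $x$-labeled edge in $\Gamma_{H_2}$.

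For the converse, suppose a homomorphism $\varphi : \Gamma_{H_2} \to \Gamma_{H_1}$ is given; I would deduce $H_1 \le H_2$ by realizing every reduced circuit of $\Gamma_{H_1}$ based at $r_1$ inside $\Gamma_{H_2}$. Starting at $r_2$, which maps to $r_1$, I trace such a circuit edge by edge, at each step invoking the backward half of the defining equivalence to pull an edge of $\Gamma_{H_1}$ back to an edge of $\Gamma_{H_2}$ carrying the same label. The resulting path has the same label $w$ as the original circuit, so $w \in L(\Gamma_{H_2}) = H_2$; since reduced circuits of $\Gamma_{H_1}$ realize precisely the elements of $H_1$, this yields $H_1 \subseteq H_2$.

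The main obstacle is this converse direction: the backward half of the edge equivalence pulls back only individual edges, so the delicate step is to guarantee that the lifted path closes up to a genuine circuit based at $r_2$ rather than terminating at some other vertex lying over $r_1$. I expect to resolve this by exploiting that $\Gamma_{H_1}$ and $\Gamma_{H_2}$ are the unique folded, connected core graphs of their subgroups, which forces $\varphi$ to be compatible with the coset structure and hence to send closed paths to closed paths and lift them accordingly. The same uniqueness shows that $\varphi$, when it exists, is canonical, which is the content recorded by the cited results.
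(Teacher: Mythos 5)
Your proof attempt has a genuine gap, and it cannot be repaired as written, because the well-definedness step you call ``crucial'' in the first direction runs exactly backwards. Two reduced paths from $r_2$ to $u$ in $\Gamma_{H_2}$ differ by a circuit of $\Gamma_{H_2}$, whose label is an element of $H_2 = L(\Gamma_{H_2})$; for your two traced paths in $\Gamma_{H_1}$ to end at the same vertex, that discrepancy must close up in $\Gamma_{H_1}$, i.e.\ its label must lie in $L(\Gamma_{H_1}) = H_1$. That requires $H_2 \subseteq H_1$ --- the opposite of the hypothesis $H_1 \le H_2$. (There is also the prior problem that ``the vertex of $\Gamma_{H_1}$ reached by tracing $w$ from $r_1$'' need not exist, since $\Gamma_{H_1}$ is folded but not complete.) In fact the statement as printed in the paper is a misprint of the result it cites: the paper gives no proof at all (it defers to \cite{Kapovich_Miasnikov:2002}), and the correct statement there is that $H_1 \le H_2$ if and only if there is a morphism $\Gamma_{H_1} \to \Gamma_{H_2}$, i.e.\ from the graph of the \emph{smaller} subgroup to that of the \emph{larger} one. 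A concrete counterexample to what you are trying to prove: $H_2 = F(x,y)$ and $H_1 = \langle x^2,\, y,\, xyx^{-1} \rangle$ of index $2$. Then $H_1 \le H_2$, and $\Gamma_{H_2}$ is a single vertex with an $x$-loop and a $y$-loop, while $\Gamma_{H_1}$ has no $x$-loop at its root; so no basepoint- and label-preserving map $\Gamma_{H_2} \to \Gamma_{H_1}$ exists at all. (Note also that the forward half of the paper's ``$\Leftrightarrow$'' edge condition alone already forces $H_2 = L(\Gamma_{H_2}) \subseteq L(\Gamma_{H_1}) = H_1$, so the proposition as printed could only hold when $H_1 = H_2$.)

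The good news is that your machinery is exactly the right one for the corrected statement; it only needs the arrow flipped. For $u \in V(\Gamma_{H_1})$, use the core property: $u$ lies on a reduced circuit at $r_1$, whose label lies in $H_1 \subseteq H_2 = L(\Gamma_{H_2})$ and hence is readable as a reduced path from $r_2$ in the folded graph $\Gamma_{H_2}$; define $\varphi(u)$ as the endpoint of the prefix corresponding to the segment from $r_1$ to $u$. Your well-definedness argument then goes through verbatim, because now the discrepancy circuit has label in $H_1 \subseteq H_2$, which does close up in $\Gamma_{H_2}$, and foldedness gives uniqueness of the traced paths. Moreover, the converse becomes the easy direction, and the difficulty you flagged evaporates: a basepoint- and label-preserving map sends circuits at $r_1$ to circuits at $r_2$ with the same labels, so $H_1 = L(\Gamma_{H_1}) \subseteq L(\Gamma_{H_2}) = H_2$, with no edge-by-edge pullback needed. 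Your proposed fix for the lifting problem --- invoking uniqueness of core graphs to force lifted paths to close up at $r_2$ --- is not a proof: such a map need not be surjective, so the preimages your lift requires need not exist, and nothing forces a lift of a closed path to terminate at $r_2$ rather than at another vertex over $r_1$. That gap is real, but it becomes moot once the direction of the morphism is corrected.
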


\subsection{Regularity, self-similarity, and shift operation}
\label{se:shift}

Here we introduce the shift operation on subgroup graphs 
and discuss two properties that allow it to be computed efficiently.
We say that a subgroup graph $\Gamma = (V,E^\pm,\mu,r)$
is \emph{$X$-regular} (or deterministic) 
if for each vertex $v$ of $\Gamma$
and for each $x\in X^{\pm}$, there is exactly one edge $e$ 
with $o(e)=v$ and $\mu(e)=x$.

\begin{proposition}[{{\cite[cf. Proposition 8.3]{Kapovich_Miasnikov:2002}}}]
Let $\Gamma$ be the subgroup graph of $H\le F$.
Then
$$
[F:H]=
\begin{cases}
|\Gamma| & \mbox{ if } \Gamma \mbox{ is $X$-regular,}\\
\infty & \mbox{ otherwise.}
\end{cases}
$$
In particular,
$[F:H]<\infty\ \ \Leftrightarrow\ \ \Gamma_H$
is finite and $X$-regular.
\end{proposition}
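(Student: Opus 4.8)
The plan is to characterize the index $[F:H]$ via counting in the subgroup graph $\Gamma_H$, reducing the entire statement to two facts: that an $X$-regular subgroup graph coincides with the core of the full Schreier graph (so it has exactly $[F:H]$ vertices), and that failure of $X$-regularity forces infinite index. Recall from the Schreier graph discussion that $\Sch(H,X)$ has vertex set the right cosets $Hg$, so $|V(\Sch(H,X))| = [F:H]$, that $L(\Sch(H,X)) = H$, and that $\Sch(H,X)$ is folded, connected, and has inversible edges. The key observation I would isolate first is that $\Sch(H,X)$ is automatically $X$-regular: by construction, for every coset $Hg$ and every $x \in X^\pm$ there is exactly the edge $Hg \stackrel{x}{\to} Hgx$, so each vertex has exactly one outgoing edge per label.

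First I would prove the direction $[F:H] < \infty \Rightarrow \Gamma_H$ is finite and $X$-regular. When the index is finite, $\Sch(H,X)$ has finitely many vertices and is $X$-regular. Since every vertex of a finite $X$-regular graph lies on a reduced circuit through the root (each vertex has full in- and out-degree, so one can always extend a path and eventually return), the core operation removes nothing: $\Core(\Sch(H,X)) = \Sch(H,X)$. By the uniqueness theorem quoted above, $\Gamma_H = \Core(\Sch(H,X)) = \Sch(H,X)$, which is finite and $X$-regular with exactly $[F:H]$ vertices. This simultaneously establishes the formula $[F:H] = |\Gamma_H|$ in the regular case, since $|\Gamma_H| = |V(\Sch(H,X))| = [F:H]$.

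Next I would prove the contrapositive of the other direction: if $\Gamma_H$ is \emph{not} $X$-regular, then $[F:H] = \infty$. Here the idea is that a vertex $v$ of $\Gamma_H$ missing an outgoing edge labeled $x$ corresponds to a coset from which one can read off infinitely many distinct cosets in $\Sch(H,X)$; concretely, the edge $v \stackrel{x}{\to} Hgx$ present in $\Sch(H,X)$ must lead to a vertex outside the core, and the core being a proper subgraph of an infinite Schreier graph is exactly what deficiency of regularity detects. Equivalently, if $\Gamma_H$ were finite but not $X$-regular, completing it to a regular graph by adding the missing edges and fresh vertices produces a strictly larger $X$-regular graph accepting the same subgroup, contradicting uniqueness unless infinitely many vertices are forced; I would argue that the folding and inversibility constraints make any $X$-regular completion coincide with the full (hence infinite) Schreier graph.

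The main obstacle I anticipate is the clean handling of this last step: showing that non-$X$-regularity of the \emph{core} graph $\Gamma_H$ genuinely forces the ambient Schreier graph to be infinite, rather than merely having vertices outside the core. The subtlety is that a finite-index subgroup could a priori have a core graph that looks deficient; ruling this out requires the fact that for finite index the Schreier graph equals its own core (established in the first direction), so deficiency of $\Gamma_H$ is equivalent to $\Sch(H,X) \neq \Core(\Sch(H,X))$, which can only happen when $\Sch(H,X)$ is infinite. I would make this precise by proving the equivalence $\Sch(H,X)$ is $X$-regular and finite $\iff$ $\Core(\Sch(H,X))$ is $X$-regular, leaning on the observation that in a finite folded connected inversible graph, $X$-regularity is equivalent to every vertex lying on a reduced circuit through the root.
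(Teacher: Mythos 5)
The paper does not actually prove this proposition: it is quoted from Kapovich--Miasnikov (cf.\ their Proposition 8.3) in a preliminaries section whose proofs are explicitly omitted, so there is no in-paper argument to compare against and I assess your proposal on its own terms. Your first direction is essentially right: $\Sch(H,X)$ is $X$-regular with $[F:H]$ vertices, and when it is finite every edge (note: the core condition is about edges, not just vertices) lies on a reduced circuit through the root, so $\Gamma_H=\Core(\Sch(H,X))=\Sch(H,X)$ is finite, $X$-regular, and of size $[F:H]$. The contrapositive of this already yields your ``second direction'' ($\Gamma_H$ not $X$-regular $\Rightarrow$ $[F:H]=\infty$), so the discussion of $X$-regular completions and of deficiency ``detecting'' vertices outside the core adds nothing to that implication.

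The genuine gap is elsewhere: nothing in your two directions proves that a finite $X$-regular $\Gamma_H$ forces $[F:H]<\infty$, which is exactly the ``$\Leftarrow$'' of the displayed equivalence, nor that $[F:H]=|\Gamma_H|$ when $\Gamma_H$ is $X$-regular but the index is not already assumed finite. The one statement you offer that would cover this --- ``$\Sch(H,X)$ is $X$-regular and finite $\iff$ $\Core(\Sch(H,X))$ is $X$-regular'' --- is false from right to left: for $H=\langle\langle a\rangle\rangle\le F(a,b)$, the normal closure of $a$, the core graph is the entire bi-infinite Schreier graph (a line of $b$-edges with an $a$-loop at each vertex), which is $X$-regular, yet $[F:H]=\infty$. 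The lemma you actually need, and which makes no reference to finiteness, is: \emph{if $\Core(\Sch(H,X))$ is $X$-regular, then it equals $\Sch(H,X)$}. Indeed, since $\Sch(H,X)$ is folded, a core vertex already carrying one outgoing core edge for each label in $X^{\pm}$ admits no edge of $\Sch(H,X)$ leaving the core, so the core is closed under adjacency and, by connectedness, is all of $\Sch(H,X)$. With this lemma, $X$-regularity of $\Gamma_H$ gives $|\Gamma_H|=|V(\Sch(H,X))|=[F:H]$ in one stroke, for finite and infinite index alike; combined with your first direction (finite index $\Rightarrow$ the core is $X$-regular, hence non-regular $\Rightarrow$ infinite index) this yields the whole proposition and collapses your case analysis.
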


By $\Aut(\Gamma)$ we denote the \emph{group of automorphisms} of $\Gamma$.
We say that $\Gamma = (V,E^\pm,\mu,r)$ is \emph{self-similar} if
for every $u,v \in V$ there exists an automorphism of $\Gamma$ mapping $u$ to $v$; 
for a folded graph $\Gamma$ such an automorphism, when it exists, is unique.
We denote this automorphism by $S_{u,v}$ and refer to it as a \emph{shift operation}.
Note that $S_{u,v}$ induces 
\begin{itemize}
\item 
a permutation on the set of vertices $V$;
\item 
a permutation on the set of edges $E^\pm$;
\item 
a bijection from sequences of edges 
to sequences of edges
$$
e_1\dots e_k
\ \ \stackrel{S_{u,v}}{\mapsto} \ \ 
S_{u,v}(e_1) \dots S_{u,v}(e_k),
$$
and the corresponding bijection from 
the set of paths that start at the vertex $u$ to
the set of paths that start at $v$.
\end{itemize}
We use the same notation $S_{u,v}$ for the induced functions.
Clearly, shift operations preserve labels, i.e.,
for every $u,v$ and a sequence of edges $p$ we have
$$
\mu(S_{u,v}(p)) = \mu(p).
$$

\begin{theorem}[{{\cite[Theorem 8.14]{Kapovich_Miasnikov:2002}}}]
$H\unlhd F(X)$ if and only if
$\Gamma_H$ is $X$-regular and self-similar.
\end{theorem}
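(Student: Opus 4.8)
The statement to prove is: $H\unlhd F(X)$ if and only if $\Gamma_H$ is $X$-regular and self-similar. Let me work out both directions.

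Let me first understand what each condition means geometrically. Since $\Gamma_H = \Core(\Sch(H,X))$, its vertices are the cosets $Hg$ that lie on the core, and an edge $Hg \xrightarrow{x} Hgx$ encodes right multiplication by $x$. When $H$ has finite index, $\Gamma_H$ IS the full Schreier graph (it's $X$-regular), so vertices are exactly the right cosets $H\backslash F$ and the root is $H$ itself. The self-similarity condition says that for any two cosets $Hg_1, Hg_2$ there's a unique graph automorphism sending $Hg_1 \mapsto Hg_2$, and this automorphism preserves all labels.

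For the forward direction ($H \unlhd F \Rightarrow$ regular and self-similar), $X$-regularity follows from normality giving finite index... wait, that's not right — normal subgroups can have infinite index. Let me reconsider: the theorem as stated must be about the structural correspondence, and I suspect $X$-regularity here is being used to mean "the coset action is well-defined" rather than finiteness. So the plan is: when $H$ is normal, left multiplication by $g$ descends to a well-defined map on right cosets $Hg' \mapsto Hgg' = gHg' = g(Hg')$... actually the clean statement uses that $H$ normal makes $F/H$ a group acting on itself. The plan is to exhibit the automorphism $S_{r, Hg}$ explicitly as the map induced by left multiplication: $S_{H, Hg}(Hg') = Hgg'$. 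I must check this is a graph automorphism — it preserves the root appropriately, respects edges (an edge $Hg' \xrightarrow{x} Hg'x$ maps to $Hgg' \xrightarrow{x} Hgg'x$, which is genuinely an edge), and is a bijection on vertices. The key point where normality enters is verifying this map is well-defined on cosets: $Hg' = Hg''$ must force $Hgg' = Hgg''$, which holds because $g'g''^{-1} \in H \Rightarrow gg'g''^{-1}g^{-1} \in gHg^{-1} = H$.

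For the converse, I assume $\Gamma_H$ is $X$-regular and self-similar and must deduce $H \unlhd F$. The plan is to show $g^{-1}Hg \subseteq H$ for all $g \in F$, equivalently $L(\Gamma_H)$ is closed under conjugation. Take $w \in H$, so there is a circuit at the root $r$ labeled $w$. I want to show $g^{-1}wg \in H$. Using self-similarity, let $v$ be the vertex reached from $r$ by reading $g$ (well-defined since $\Gamma_H$ is $X$-regular), and apply the shift $S_{v,r}$ to the circuit at $v$ labeled $w$; since shifts preserve labels and map paths-at-$v$ to paths-at-$r$ bijectively, reading $w$ from $r$ returns to $r$. The main work is assembling the path for $g^{-1}wg$: read $g^{-1}$ from $r$, then $w$, then $g$, and show it closes up at $r$ using that the vertex reached by $g^{-1}$ and the self-similar structure force the $w$-loop to return. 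The step I expect to be the main obstacle is handling this converse cleanly — specifically, showing that $X$-regularity plus a single transitive, label-preserving automorphism family is strong enough to force every conjugate loop to close, rather than merely the loops based at one vertex. I would resolve this by noting that self-similarity makes the vertex-reaching map $g \mapsto (\text{endpoint of the } g\text{-path from } r)$ a group homomorphism $F \to \Aut(\Gamma_H)^{\mathrm{op}}$ onto a regular (simply transitive) action, whose kernel is exactly $H = \{w : w \text{ labels a loop at } r\}$; a kernel is always normal, giving the result.
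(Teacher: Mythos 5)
The paper itself gives no proof of this statement: it is quoted from Kapovich--Miasnikov, and the surrounding text explicitly defers all proofs to that reference, so your argument has to stand on its own. Your converse direction is essentially sound: by $X$-regularity the word $g$ read from $r$ reaches a well-defined vertex, the shift transports the $w$-circuit at $r$ to a $w$-circuit at that vertex, and foldedness forces the path labelled $g^{-1}wg$ to close up at $r$. But your final reformulation overstates what you have proved: the set $\{w : w \text{ labels a loop at } r\}$ is the \emph{stabilizer} of $r$ under the right $F$-action on vertices, not automatically the kernel of that action. You still need the (easy) observation that the label-preserving automorphisms $S_{u,v}$ commute with the right action, so all point stabilizers coincide and the stabilizer of $r$ equals the kernel; also it is $\Aut(\Gamma_H)$, not $F$, that acts simply transitively on vertices.

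The genuine gap is in the forward direction. In this paper $X$-regularity means that at every vertex each letter of $X^{\pm}$ labels exactly one outgoing edge; it is not, as you guessed, a restatement of ``the coset action is well-defined,'' and after voicing that guess you never return to prove it. The full Schreier graph $\Sch(H,X)$ is always $X$-regular, but $\Gamma_H$ is defined as $\Core(\Sch(H,X))$, and passing to the core can delete vertices and edges (it does so drastically for, say, $H=\gp{a}\le F(a,b)$). So the forward direction requires showing that for a \emph{nontrivial} normal $H$ the Schreier graph is already a core graph, i.e.\ every edge $Hg\stackrel{x}{\to}Hgx$ lies on a reduced circuit based at $H$; one route is to note that by normality the stabilizer of every vertex is $H\ne 1$, so every vertex carries nontrivial reduced loops, and to assemble from $g$, $x$ and a conjugate of some $1\ne h\in H$ a reduced circuit at the root through the given edge. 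Without this step your left-multiplication maps are automorphisms of the Schreier graph, not yet of $\Gamma_H$. Relatedly, $H=\{1\}$ is normal but its core graph is a single vertex with no edges, hence not $X$-regular; the cited theorem tacitly assumes $H$ nontrivial, and a complete write-up should say so.
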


\section{Preliminaries: straight-line programs}
\label{se:SLP}

In this section we review the definition of straight-line programs,
following the exposition in \cite[Chapter 19]{MSU_book:2011}.
See also \cite{lohreyCompressedWordProblem2014} for further background.

\subsection{Definition of a straight-line program}

Formally, a \emph{straight-line program} (SLP) is a quadruple 
$P = (X, \CN, R, \delta)$, where
\begin{itemize}
    \item
$X = \{x_1, \ldots, x_n\}$ is a finite set 
of \emph{terminal symbols}
(the \emph{alphabet}).
    \item
$\CN$ is a finite set of {\em non-terminal symbols}.
    \item
$R \in \CN$ is the {\em root symbol}, also denoted by $\rootSLP(P)$.
    \item
$\delta : \CN \to X \cup \{\varepsilon\} \cup  (\CN\times \CN)$
is a \emph{production function} that determines the set of \emph{production rules},
where $\varepsilon$ is a special symbol that denotes
the \emph{empty word}.
There are two types of production rules defined by $\delta(N)$ for $N\in\CN$:
\begin{itemize}
\item 
$\delta(N) = x \in X \cup \{\varepsilon\}$, 
\item 
$\delta(N) = (A,B) \in \CN\times \CN$.
\end{itemize}
To be called an SLP, $P$ must define an acyclic production graph, defined below.
\end{itemize}

The \emph{production graph} for $P$ is a directed graph $G(P) = (V,E)$, 
where $V = X \sqcup \{\varepsilon\} \sqcup \CN$ and
\begin{align*}
E= 
& \ \ \  \{(N,\delta(N)) \mid \delta(N) \in X\cup\{\varepsilon\}\} \\
& \cup \{(N,A) \mid \delta(N) = (A,B) \mbox{ for some } B\in\CN\} \\
& \cup \{(N,B) \mid \delta(N) = (A,B) \mbox{ for some } A\in\CN\}.
\end{align*}
The graph $G(P)$ is \emph{acyclic} if it does not contain a directed cycle.

For an SLP $P = (X, \CN, R, \delta)$ inductively define a function 
$\val : \CN \rightarrow X^\ast$ by
$$
\val(N) =
\begin{cases}
x & \mbox{if } \delta(N) = x \in X \cup \{\varepsilon\},\\
\val(A) \val(B) & \mbox{if } \delta(N) = (A,B),\\
\end{cases}
$$
and the sequence $\val(P)$ as $\val(R).$
The word $\val(P)$ is called the \emph{output} of $P$.
If $X$ is a group alphabet and $\val(N)$ is a
reduced word for every $N\in \CN$, then we say that $P$ is 
\emph{reduced}.

In all cases considered in this paper the set of terminals $X$ is fixed.
Therefore, we define the size of an SLP $P$ as
the size of $\CN$, denoted by $|P|$.

We say that a non-terminal $N\in \CN$ in $P$ is \emph{essential} 
if the following two conditions hold:
\begin{itemize}
\item[(a)]
The graph $G(P)$ contains a path from the root $R$ to $N$.
Otherwise, the value of $N$ does not contribute to $\val(P)$,
and $N$ can be removed from $\CN$.
\item[(b)]
Either $\delta(N) \neq \varepsilon$ or $N = R$.
Otherwise, $N$ is a non-root intermediate non-terminal with
$\val(N) = \varepsilon$ and, hence, can be removed from $\CN$
(with an appropriate redefinition of $\delta$).
\end{itemize}
It is easy to see that all nonessential non-terminals can be removed in
$O(|P|)$ time; we refer to this process as 
the \emph{pruning procedure}.
Throughout the paper we assume that all non-terminals are essential.

\subsection{Data representation for SLPs}
\label{se:data-slps}

In all our computations the alphabet $X$ is fixed and all operations
on SLPs are actually performed on $\CN$ and $\delta$.
To simplify analysis, we make two assumptions.
\begin{itemize}
\item 
\textsc{(Assumption-I).}
We have a sufficiently large pool 
of symbols available for non-terminals
and that it takes $O(1)$ time to generate a symbol not involved in
any of the currently used SLPs.
\item 
\textsc{(Assumption-II).}
The function $\delta$ is stored in a container that enables
$O(1)$ time complexity for the following manipulations:
\begin{itemize}
\item 
for a given $N\in\CN$ get $\delta(N)$;
\item 
for a given $N\in\CN$ delete the production for $N$;
\item 
for a given $N\in\CN$ and $pr\in X \cup (\CN\times \CN)$
add the production $\delta(N)= pr$ to $\delta$;
\item 
for a given $N\in\CN$ modify the value of $\delta(N)$.
\end{itemize}
In particular, for two functions $\delta_1,\delta_2$ with disjoint supports
there is a procedure that 
adds the description of $\delta_2$ to the description of $\delta_1$
in $O(|\delta_2|)$ time.
\end{itemize}
We emphasize that Assumptions~I--II are made solely for convenience.
Any standard representation of SLPs (e.g., encoding non-terminals as natural
numbers and storing the production function $\delta$ as a dictionary)
would introduce at most a polylogarithmic overhead in the running time.
Since we are only concerned with polynomial-time computability,
these assumptions can be adopted without loss of generality.

\subsection{Basic properties}

Here we discuss basic computational properties of SLPs.

\begin{lemma}\label{le:w-trivial}
For a given SLP $P$ it takes $O(|P|)$ time to decide whether $\val(P)=\varepsilon$.
\end{lemma}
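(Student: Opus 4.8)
The plan is to introduce, for every non-terminal $N\in\CN$, a Boolean flag $\mathrm{empty}(N)$ that records whether $\val(N)=\varepsilon$, and to observe that $\val(P)=\varepsilon$ if and only if $\mathrm{empty}(R)$ is true. The value $\mathrm{empty}(N)$ is determined by the production of $N$ together with the flags of its immediate children, so the whole computation reduces to a single traversal of the acyclic production graph $G(P)$.

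Concretely, inspecting the recursive definition of $\val$ shows that
$$
\mathrm{empty}(N)=
\begin{cases}
\text{true} & \text{if } \delta(N)=\varepsilon,\\
\text{false} & \text{if } \delta(N)=x\in X\setminus\{\varepsilon\},\\
\mathrm{empty}(A)\wedge \mathrm{empty}(B) & \text{if } \delta(N)=(A,B).
\end{cases}
$$
First I would topologically sort $G(P)$, which is possible because $G(P)$ is acyclic, and then process the non-terminals in reverse topological order; this guarantees that whenever a rule $\delta(N)=(A,B)$ is examined, the flags $\mathrm{empty}(A)$ and $\mathrm{empty}(B)$ are already available. Equivalently, one can run a memoized depth-first search starting from $R$, caching $\mathrm{empty}(N)$ the first time $N$ is resolved so that no non-terminal is expanded twice. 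Finally the algorithm returns $\mathrm{empty}(R)$.

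For the running time, note that the set of terminals $X$ is fixed, so $|V(G(P))|=|X|+|\CN|=O(|P|)$, and each non-terminal emits at most two edges, whence $|E(G(P))|=O(|P|)$ as well. The topological sort therefore costs $O(|P|)$, and each non-terminal is then processed once: by \textsc{Assumption-II} the production $\delta(N)$ is fetched in $O(1)$ time, and computing $\mathrm{empty}(N)$ from its children is a single Boolean operation, again $O(1)$. Summing over all non-terminals yields the claimed $O(|P|)$ bound.

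The computation is short, so the only point that requires a little care is that a linear-time procedure must not recurse through the full, possibly exponentially long, word $\val(P)$: the correctness of the shortcut relies precisely on memoization (or on the reverse-topological ordering), which ensures that each non-terminal contributes a constant amount of work regardless of how many times it is referenced elsewhere. One should also keep the special symbol $\varepsilon$ distinct from the genuine letters of $X$ when reading off the base cases, since among the terminal productions it is $\varepsilon\in X$ alone that yields an empty value.
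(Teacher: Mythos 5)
Your proposal is correct and follows essentially the same approach as the paper: both use the recursive characterization that $\val(N)=\varepsilon$ iff $\delta(N)=\varepsilon$ or $\delta(N)=(A,B)$ with both children empty, evaluated once per non-terminal over the acyclic production graph. You merely spell out the memoization/topological-order detail and the explicit \emph{false} case for genuine letters, which the paper leaves implicit.
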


\begin{proof}
Clearly, for any $N\in\CN$ we have $\val(N)=\varepsilon$ if and only if
one of the following two conditions is satisfied:
\begin{itemize}
\item
$\delta(N) = \varepsilon$, or
\item
$\delta(N) = (A,B)$ and $\val(A)=\varepsilon$ and $\val(B)=\varepsilon$.
\end{itemize}
Hence, we can decide if $\val(N)=\varepsilon$ 
for all non-terminals $N\in\CN$ in linear time $O(|\CN|)$
by starting at the root 
$R$, proceeding recursively down the tree to the leaves, 
and then traversing back up to the root.
\end{proof}

For $N\in\CN$ denote by $\first(N)$ and $\last(N)$
the first and the last element in $\val(N)$ respectively, if $\val(N)\ne\varepsilon$.
If $\val(N)=\varepsilon$, then we write 
$\first(N)=\last(N)=\varnothing$.

\begin{lemma}\label{le:first-last}
Given an SLP $P=(X,\CN,R,\delta)$, it takes $O(|P|)$ time to compute
$\first(R)$ and $\last(R)$.
\end{lemma}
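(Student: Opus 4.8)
The plan is to compute $\first(R)$ and $\last(R)$ by a single bottom-up pass over the non-terminals, mirroring the structure of the argument in Lemma~\ref{le:w-trivial}. First I would observe that $\first$ and $\last$ satisfy a simple recursion that follows directly from the definition of $\val$. If $\delta(N)=x\in X$ with $x\neq\varepsilon$, then $\first(N)=\last(N)=x$; if $\delta(N)=\varepsilon$, then $\first(N)=\last(N)=\varnothing$. If $\delta(N)=(A,B)$, then since $\val(N)=\val(A)\circ\val(B)$, the first symbol of $\val(N)$ is the first symbol of $\val(A)$ when $\val(A)\neq\varepsilon$, and otherwise it is the first symbol of $\val(B)$. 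Symmetrically, $\last(N)$ equals $\last(B)$ when $\val(B)\neq\varepsilon$, and otherwise equals $\last(A)$. In formulas, writing the cases out, this gives
\[
\first(N)=
\begin{cases}
\first(A) & \text{if } \val(A)\neq\varepsilon,\\
\first(B) & \text{otherwise,}
\end{cases}
\qquad
\last(N)=
\begin{cases}
\last(B) & \text{if } \val(B)\neq\varepsilon,\\
\last(A) & \text{otherwise.}
\end{cases}
\]

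Next I would address the algorithmics. The recursion above refers, for each non-terminal $N$ with $\delta(N)=(A,B)$, only to the values $\first$, $\last$ of its two children $A,B$ together with the predicate ``$\val(\cdot)=\varepsilon$'' on those children. By Lemma~\ref{le:w-trivial} we may compute the predicate $\val(N)=\varepsilon$ for every $N\in\CN$ in time $O(|P|)$, so I would run that first and store the result. I would then process the non-terminals in a reverse topological order of the production graph $G(P)$ (i.e., children before parents), which exists because $G(P)$ is acyclic and can be obtained in $O(|P|)$ time. For each $N$ I apply the appropriate case of the recursion, performing $O(1)$ lookups of already-computed values of its children; by \textsc{Assumption-II} each access to $\delta$ and to the stored tables costs $O(1)$. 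Hence each non-terminal is handled in constant time and the whole pass runs in $O(|\CN|)=O(|P|)$ time. Reading off $\first(R)$ and $\last(R)$ at the end is immediate.

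The only genuinely delicate point, and the one I would make sure to handle cleanly, is the interaction with empty values: a child may have $\val=\varepsilon$, in which case its contribution to $\first$ or $\last$ must be skipped, and both children may be empty, in which case $\val(N)=\varepsilon$ and we set $\first(N)=\last(N)=\varnothing$. This is exactly why the $\varepsilon$-predicate from Lemma~\ref{le:w-trivial} is precomputed: it lets each step decide in $O(1)$ which child to inherit from, without recomputing emptiness on the fly. Everything else is routine bookkeeping, so I do not expect any real obstacle beyond carefully stating these cases and confirming the $O(1)$-per-node cost under \textsc{Assumption-II}.
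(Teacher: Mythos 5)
Your proposal is correct and follows essentially the same route as the paper: the same case recursion for $\first$ and $\last$ on productions $\delta(N)=(A,B)$ guided by the emptiness predicate, evaluated in one linear pass over $\CN$. You merely spell out the precomputation via Lemma~\ref{le:w-trivial} and the topological ordering more explicitly than the paper does.
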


\begin{proof}
For every $N\in\CN$ the following holds:
\begin{itemize}
\item 
$\first(N)=\varnothing$ if $\delta(N) = \varepsilon$.
\item 
$\first(N)=x$ if $\delta(N)  = x \in X$.
\item 
$\first(N)=\first(A)$  if $\delta(N) = (A,B)$ and $\val(A)\ne\varepsilon$.
\item 
$\first(N)=\first(B)$  if $\delta(N) = (A,B)$ and $\val(A)=\varepsilon$.
\end{itemize}
We can use these formulae to compute
$\first(N)$ for all non-terminals $n\in\CN$ in linear time $O(|\CN|)$
as in Lemma \ref{le:w-trivial}.
The value of $\last(N)$ can be computed similarly.
\end{proof}

\begin{lemma}\label{le:CP-w}
For a given word $w = x_1 \dots x_k$, where $x_i\in X$,
it requires $O(|w|)$ time to construct 
an SLP $P_w$ satisfying $\val(P_w) = w$.
\end{lemma}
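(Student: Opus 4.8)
The plan is to build a \emph{linear} (caterpillar-shaped) SLP directly from the letters of $w$, spending constant time per letter. First I would dispose of the degenerate cases. If $w=\varepsilon$ (i.e.\ $k=0$), I take $\CN=\{R\}$ with $\delta(R)=\varepsilon$, which outputs the empty word in $O(1)$ time. If $k=1$, I take $\CN=\{R\}$ with $\delta(R)=x_1\in X$, so that $\val(R)=x_1=w$.

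For $k\ge 2$ I would first create, for each position $i=1,\dots,k$, a terminal non-terminal $T_i$ with production $\delta(T_i)=x_i\in X$, so that $\val(T_i)=x_i$. Then I would chain these together by introducing concatenation non-terminals $C_2,\dots,C_k$ defined by $\delta(C_2)=(T_1,T_2)$ and $\delta(C_j)=(C_{j-1},T_j)$ for $j=3,\dots,k$, and set the root $R=C_k$. By \textsc{(Assumption-I)} each fresh symbol is obtained in $O(1)$ time, and by \textsc{(Assumption-II)} each production is inserted into $\delta$ in $O(1)$ time; since we create $2k-1$ non-terminals together with the same number of productions, the whole construction runs in $O(k)=O(|w|)$ time.

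Correctness follows by a straightforward induction on $j$: we have $\val(C_2)=\val(T_1)\circ\val(T_2)=x_1x_2$, and if $\val(C_{j-1})=x_1\dots x_{j-1}$ then $\val(C_j)=\val(C_{j-1})\circ\val(T_j)=x_1\dots x_j$. Taking $j=k$ gives $\val(P_w)=\val(R)=\val(C_k)=x_1\dots x_k=w$, as required. The production graph $G(P_w)$ is acyclic because every production points only to symbols of strictly smaller index (the $T_i$ and the earlier $C$'s), so there can be no directed cycle and $P_w$ is a legitimate SLP.

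There is no genuine obstacle here; the statement is routine once the data-representation conventions are in place. The only points that require any care are the bookkeeping that keeps each step at cost $O(1)$ — which is exactly what \textsc{(Assumption-I)} and \textsc{(Assumption-II)} were introduced to guarantee — and the empty-word edge case, which must be split off separately since the chaining construction presupposes at least one letter. I would also remark that the SLP produced this way has depth $\Theta(k)$; this is harmless for the claimed bound, and if logarithmic depth were ever desired one could instead combine the $T_i$ along a balanced binary tree at the same $O(|w|)$ construction cost.
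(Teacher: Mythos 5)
Your construction is correct and is essentially the same left-comb (caterpillar) SLP the paper builds: terminal productions for the letters followed by a chain of concatenation non-terminals, with the degenerate cases $|w|\le 1$ split off. The only cosmetic difference is that the paper allocates one terminal non-terminal per \emph{distinct} letter of $w$ rather than one per position, which does not affect the $O(|w|)$ bound.
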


\begin{proof}
Clearly, the statement holds when $|w|=0$ or $|w|=1$.
Let $X'\subseteq X$ be the set of letters involved in $w$.
Let $\CN=X' \cup \{A_1,\dots,A_{k-1}\}$.
Define an SLP $P_w=(X,\CN,A_{k-1},\delta)$, where $\delta$
is defined as follows:
\begin{itemize}
\item 
$\delta(A_x)=x$ for $x\in X'$,
\item 
$\delta(A_1) = (A_{x_1}, A_{x_2})$,
\item 
$\delta(A_2) = (A_1, A_{x_3}), \dots,\delta(A_{k-1}) = (A_{k-2}, A_{x_k})$.
\end{itemize}
$P_w$ can be constructed in $O(|w|)$ time and
satisfies $\val(P_w) = w$.
\end{proof}

\subsection{SLP concatenation}

Consider two straight-line programs 
$P_1 = (X, \CN_1, R_1, \delta_1)$ and
$P_2 = (X, \CN_2, R_2, \delta_2)$ over the same alphabet $X$.
Assuming that $\CN_1\cap\CN_2 = \varnothing$ and $A\notin \CN_1\cup\CN_2$
define a new SLP $P = (X, \CN_1\cup\CN_2 \cup \{A\}, A, \delta)$, where
$\delta$ is defined by
$$
\delta(N) = 
\begin{cases}
\delta_1(N) & \mbox{ if } N\in\CN_1,\\
\delta_2(N) & \mbox{ if } N\in\CN_2,\\
(R_1,R_2) & \mbox{ if } N=A.
\end{cases}
$$

\begin{lemma}
$\val(P) = \val(P_1) \circ \val(P_2)$.
\end{lemma}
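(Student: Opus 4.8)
The plan is to reduce the claim to the observation that $P$ literally contains $P_1$ and $P_2$ as disjoint sub-programs, so that the value of every non-terminal inherited from $\CN_1$ or $\CN_2$ is unchanged when computed inside $P$, and then to evaluate the new root $A$ directly. I will write $\val_P$, $\val_{P_1}$, $\val_{P_2}$ for the value functions of the three programs to keep the bookkeeping explicit.

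First I would check that $P$ is a legitimate SLP, i.e., that its production graph $G(P)$ is acyclic. By hypothesis $G(P_1)$ and $G(P_2)$ are acyclic, and since $\CN_1\cap\CN_2=\varnothing$ their vertex sets meet only in the shared terminals of $X$, which are sinks. The only additional vertex is $A$, and the only additional edges are $(A,R_1)$ and $(A,R_2)$; since $A$ is the target of no production, adjoining it cannot create a directed cycle, so $G(P)$ is acyclic and $P$ is well-defined.

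Next comes the key step: I claim that $\val_P(N)=\val_{P_1}(N)$ for every $N\in\CN_1$, and symmetrically $\val_P(N)=\val_{P_2}(N)$ for every $N\in\CN_2$. This follows by induction along a topological order of $G(P)$ (equivalently, on the length of the longest directed path leaving $N$). For $N\in\CN_1$ we have $\delta(N)=\delta_1(N)$ by definition, and because $\delta_1$ maps $\CN_1$ into $X\cup(\CN_1\times\CN_1)$, every non-terminal appearing on the right-hand side again lies in $\CN_1$; hence the defining recursion for $\val$ is identical in $P$ and in $P_1$, and the inductive hypothesis applies to those symbols. The base cases $\delta(N)=\varepsilon$ and $\delta(N)=x\in X$ are immediate. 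The argument for $\CN_2$ is identical.

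Finally I would evaluate the root. By definition $\val(P)=\val_P(A)$, and since $\delta(A)=(R_1,R_2)$ the recursion gives $\val_P(A)=\val_P(R_1)\circ\val_P(R_2)$. Applying the claim to $R_1\in\CN_1$ and $R_2\in\CN_2$ yields $\val_P(R_1)\circ\val_P(R_2)=\val_{P_1}(R_1)\circ\val_{P_2}(R_2)=\val(P_1)\circ\val(P_2)$, as required. There is no substantive obstacle here; the only point needing care is the bookkeeping that productions of $\CN_1$-symbols never reference $\CN_2$-symbols and vice versa, which is exactly what disjointness of the two supports guarantees.
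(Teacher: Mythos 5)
Your proof is correct and follows the same route as the paper, whose entire proof is the chain of equalities $\val(P) = \val(A) = \val(R_1)\circ \val(R_2) = \val(P_1)\circ \val(P_2)$. You simply make explicit the two points the paper takes for granted, namely that $G(P)$ remains acyclic and that the values of non-terminals inherited from $\CN_1$ and $\CN_2$ are unchanged in $P$ because the two supports are disjoint.
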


\begin{proof}
$\val(P) = \val(A) = \val(R_1)\circ \val(R_2) = \val(P_1)\circ \val(P_2)$.
\end{proof}

Denote the SLP $P$ by $P_1\circ P_2$. More generally,
for SLPs $P_1,\dots,P_k$ denote by
$P_1\circ\dots\circ P_k$ the SLP
$((P_1\circ P_2) \circ P_3)\dots\circ P_k$.
Notice that concatenating $k$ SLPs requires $k-1$
additional non-terminals.

\subsection{Straight-line program over an $X$-digraph}

Let $\Gamma=(V,E^\pm,\mu,r)$ be a subgroup graph over the alphabet $X$.
We can treat the set of edges $E^\pm$ as an alphabet.
Note that $E^\pm$ forms a group alphabet since, by assumption, 
$\Gamma$ contains with every edge $e$ its inverse $e^{-1}$.
Hence, we can work with SLPs over $E^\pm$.
The output $\val(P)$ of such SLP $P$ is a sequence of edges in $\Gamma$.

Let $P$ be an SLP over an $X$-digraph $\Gamma$.
For $N\in\CN$ define vertices $o(N)$ and $t(N)$ as
$$
o(N) = o(\first(N)) 
\mbox{ and }
t(N) = t(\last(N)), 
$$
if $\val(N)\ne\varepsilon$ and as $\varnothing$ if  
$\val(N) = \varepsilon$.

\begin{lemma}\label{le:SLP-connected}
It takes linear time to decide if the sequence of edges $\val(P)$
is a path.
\end{lemma}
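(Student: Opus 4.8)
The plan is to verify the connectedness condition for the edge sequence $\val(P)$ by a single bottom-up pass over the production graph $G(P)$, checking local compatibility at each concatenation node. The key observation is that $\val(P)$ is a path precisely when, at every point where two consecutive edges $e_s e_{s+1}$ meet, we have $t(e_s) = o(e_{s+1})$. Since $\val(N)$ is built recursively by concatenating $\val(A)$ and $\val(B)$ whenever $\delta(N) = (A,B)$, the only place where a new adjacency of consecutive edges is created is at the seam between $\val(A)$ and $\val(B)$. All internal adjacencies are inherited from the subprograms $A$ and $B$. This suggests an inductive characterization.

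First I would compute $\first(N)$, $\last(N)$, and the trivial-value predicate $\val(N) = \varepsilon$ for every non-terminal, which by Lemmas~\ref{le:w-trivial} and \ref{le:first-last} takes $O(|P|)$ time overall; from these I immediately obtain $o(N)$ and $t(N)$ in the same pass. Next I would define, for each $N \in \CN$, a boolean flag $\mathrm{path}(N)$ that records whether the sequence $\val(N)$ is itself a path in $\Gamma$, and compute it inductively:
\begin{itemize}
\item
if $\delta(N) = \varepsilon$ or $\delta(N) = e \in E^\pm$, then $\mathrm{path}(N)$ is true (the empty sequence and a single edge are trivially paths);
\item
if $\delta(N) = (A,B)$, then $\mathrm{path}(N)$ is true if and only if $\mathrm{path}(A)$ and $\mathrm{path}(B)$ both hold, and the seam is compatible, i.e.\ either $\val(A) = \varepsilon$, or $\val(B) = \varepsilon$, or $t(A) = o(B)$.
\end{itemize}
The answer for $P$ is then $\mathrm{path}(R)$. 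Each node is processed in $O(1)$ time given the already-computed values of $\first$, $\last$, $o$, $t$, and the trivial-value predicate on its children, so the whole computation is $O(|P|)$.

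The main thing to justify carefully is the correctness of the inductive step, namely that a seam-compatibility check on the boundary values $t(A)$ and $o(B)$ together with the recursive flags genuinely captures the global connectedness condition. This reduces to the elementary but essential claim that the consecutive-edge adjacencies appearing in $\val(N) = \val(A) \circ \val(B)$ are exactly the adjacencies internal to $\val(A)$, the adjacencies internal to $\val(B)$, and (when both are nonempty) the single new adjacency $\last(A)\,\first(B)$ at the seam. Once this decomposition is established, a straightforward induction on the structure of the production graph shows that $\val(N)$ satisfies the connectedness condition at every internal position if and only if $\mathrm{path}(N)$ evaluates to true. I expect no real obstacle here beyond handling the degenerate cases where $\val(A)$ or $\val(B)$ is empty, in which case no seam adjacency is created and compatibility holds vacuously; this is exactly why the definitions of $o(N)$ and $t(N)$ guard against empty values.
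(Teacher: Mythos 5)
Your proposal is correct and follows essentially the same route as the paper: the paper's proof also reduces the question to checking, for every production $\delta(N)=(A,B)$ with $\val(A)\ne\varepsilon$ and $\val(B)\ne\varepsilon$, the single seam condition $t(A)=o(B)$, using Lemma~\ref{le:first-last} to get all the $t(A)$ and $o(B)$ values in linear time. Your explicit inductive flag $\mathrm{path}(N)$ and the justification that the only new adjacency created at a concatenation node is the one at the seam are just a more carefully spelled-out version of the same argument.
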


\begin{proof}
For an SLP $P$ with all essential non-terminals, $\val(P)$ is a path if and only if 
$$
\forall N\in\CN
\ \ \ 
\delta(N)=(A,B)\ \wedge\ \val(A)\ne\varepsilon\ \wedge\ \val(B)\ne\varepsilon
\ \rightarrow\ 
t(A)=o(B).
$$
This condition can be checked in linear time because,
by Lemma \ref{le:first-last},
$t(A)$ and $o(B)$ can be computed in linear time for all 
non-terminals in $P$.
\end{proof}

In the next proposition we assume that $\Gamma$
is a \textbf{fixed} subgroup graph.
This allows us to treat all relevant data related to
$\Gamma$, such as an 
explicit description of the shift operation $S_{u,v}$, 
as precomputed.

\begin{proposition}\label{pr:Suv-SLP}
Let $\Gamma=(V,E^\pm,\mu,r)$ be an $X$-regular 
and self-similar subgroup graph.
Let $u,v\in V$ and $S_{u,v}:E^\pm\to E^\pm$ be a 
permutation on the set of edges given explicitly
as a set of pairs $(e,S_{u,v}(e))$.
Given an SLP $P$ over $\Gamma$ 
it requires $O(|P|)$ time to construct an SLP $P'$
satisfying
\begin{itemize}
\item 
$\val(P')=S_{u,v}(\val(P))$,
\item 
$|P'|=|P|$.
\end{itemize}
\end{proposition}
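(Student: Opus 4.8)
The plan is to leave the entire branching structure of $P$ untouched and only relabel its terminal productions through $S_{u,v}$. The one fact that makes this work is recorded in Section~\ref{se:shift}: the operator $S_{u,v}$ acts on a sequence of edges \emph{letter-wise}, sending $e_1\dots e_k$ to $S_{u,v}(e_1)\dots S_{u,v}(e_k)$. In particular it is a homomorphism of the free monoid $(E^\pm)^\ast$ with respect to concatenation, so $S_{u,v}(w_1\circ w_2)=S_{u,v}(w_1)\circ S_{u,v}(w_2)$. Since concatenation is exactly what the rules $\delta(N)=(A,B)$ encode, those rules can be copied verbatim, and only the leaves of each derivation need to be shifted.

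Concretely, I would set $P'=(E^\pm,\CN,R,\delta')$ over the same non-terminals $\CN$ and root $R$ as $P$, with
$$
\delta'(N)=
\begin{cases}
\varepsilon & \mbox{if } \delta(N)=\varepsilon,\\
S_{u,v}(e) & \mbox{if } \delta(N)=e\in E^\pm,\\
(A,B) & \mbox{if } \delta(N)=(A,B).
\end{cases}
$$
Because $S_{u,v}$ is a permutation of $E^\pm$, each shifted terminal $S_{u,v}(e)$ is again an edge of $\Gamma$, so $P'$ is a legitimate SLP over $\Gamma$; the production graph $G(P')$ coincides with $G(P)$ on its non-terminal edges and is therefore still acyclic. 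The size bound $|P'|=|P|$ is immediate, since $P'$ reuses precisely the non-terminals of $P$.

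For correctness I would argue by induction on the non-terminals in a topological order of the production graph, proving the stronger statement $\val_{P'}(N)=S_{u,v}(\val_P(N))$ for every $N\in\CN$. The cases $\delta(N)=\varepsilon$ and $\delta(N)=e$ hold directly by the definition of $\delta'$. For $\delta(N)=(A,B)$ the inductive hypothesis gives $\val_{P'}(A)=S_{u,v}(\val_P(A))$ and $\val_{P'}(B)=S_{u,v}(\val_P(B))$, and the homomorphism property then yields
$$
\val_{P'}(N)=\val_{P'}(A)\circ\val_{P'}(B)=S_{u,v}\bigl(\val_P(A)\circ\val_P(B)\bigr)=S_{u,v}(\val_P(N)).
$$
Specializing to $N=R$ gives $\val(P')=S_{u,v}(\val(P))$.

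For the running time I would make a single pass over $\CN$, fetching $\delta(N)$ and writing $\delta'(N)$ in $O(1)$ by Assumption-II; the only nonroutine step is evaluating $S_{u,v}(e)$, which is an $O(1)$ lookup because $\Gamma$ is \textbf{fixed} and $S_{u,v}$ is supplied explicitly as a table of pairs that I treat as precomputed (stored, say, in an array indexed by the edges of $\Gamma$). This gives total time $O(|\CN|)=O(|P|)$. I do not expect a genuine obstacle here: the only point requiring care is to confirm that shifting is letter-wise, so that the concatenation rules survive unchanged. That is exactly the defining behavior of $S_{u,v}$ on edge sequences from Section~\ref{se:shift}, and it is precisely what lets us avoid altering the DAG structure and thereby keep both the size and the time linear.
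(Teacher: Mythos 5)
Your proposal is correct and matches the paper's proof, which likewise just replaces each terminal production $\delta(N)=e$ by $S_{u,v}(e)$ while leaving the non-terminal structure untouched. You simply spell out the letter-wise/homomorphism justification and the induction that the paper leaves implicit.
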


\begin{proof}
For every non-terminal $N$ such that $\delta(N)=e\in E^\pm$,
the procedure replaces $e$ with $S_{u,v}(e)$.
This does not change the number of non-terminals.
\end{proof}

Denote by $S_{u,v}(P)$ the SLP constructed in the proof of 
Proposition \ref{pr:Suv-SLP} for $P$.

We say that $P$ is \emph{reduced} if its label $\val(P)$ is reduced
as an element of $F(E)$. To reduce $P$ means to find
an SLP $P'$ satisfying the following conditions:
\begin{itemize}
\item 
$\val(P')$ is reduced as an element of $F(E)$,
\item 
$\val(P') =_{F(E)} \val(P)$,
\item 
$o(P')=o(P)$ or $\val(P') = \varepsilon$.
\end{itemize}

\begin{theorem}[{{\cite{Lohrey:2004}, Theorem 4.5}}]\label{th:reducing-SLP}
It takes polynomial-time to reduce $P$.
\end{theorem}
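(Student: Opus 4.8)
The plan is to compute a reduced SLP for $\overline{\val(P)}$ by processing the non-terminals of $P$ bottom up, handling each $N$ only after the symbols appearing in $\delta(N)$ have been handled, so that when $N$ is reached, reduced SLPs for its children are already available. For a non-terminal with $\delta(N)=x$ or $\delta(N)=\varepsilon$ the output is already reduced and nothing need be done, so all the work sits at the concatenation rules $\delta(N)=(A,B)$: given reduced SLPs $P_A,P_B$ with outputs $u=\overline{\val(A)}$ and $v=\overline{\val(B)}$, I must produce a reduced SLP for $\overline{uv}$. Since $u$ and $v$ are already reduced, cancellation can occur only at their junction: there is a maximal $\ell\ge 0$ such that the last $\ell$ edges of $u$ are the inverses, in reverse order, of the first $\ell$ edges of $v$, and then $\overline{uv}$ is the prefix of $u$ of length $|u|-\ell$ concatenated with the suffix of $v$ of length $|v|-\ell$. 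Maximality of $\ell$ guarantees that no further cancellation occurs at the new junction, so this concatenation is reduced.

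First I would reduce the computation of the cancellation length $\ell$ to a longest common prefix computation on compressed words. Writing $\hat u$ for the word obtained by reversing $u$ and inverting each edge label, one has $(\hat u)_i = u_{|u|-i+1}^{-1}$, so $\ell$ is exactly the length of the longest common prefix of $\hat u$ and $v$. An SLP for $\hat u$ is obtained from $P_A$ in linear time by swapping the two children in every binary rule and replacing each terminal edge $e$ by $e^{-1}$, which preserves size. The lengths $|\val(N)|$, which are computable bottom up in polynomial time and stored in binary, together with a polynomial-time equality test for SLP-compressed words, then let me binary-search for $\ell$ using $O(\log\max(|u|,|v|))$ equality queries on prefixes, each prefix being extracted by the standard substring operation on SLPs. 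Here I rely on the two external ingredients that make the whole theorem work, namely polynomial-time equality testing of compressed words and polynomial-time prefix/suffix extraction; these are precisely the facts supplied by \cite{Lohrey:2004}. Once $\ell$ is known I extract the prefix of $u$ and the suffix of $v$, combine them with the concatenation construction $P_A'\circ P_B'$, and treat the boundary cases $\ell=|u|$ and $\ell=|v|$ (where one side vanishes, detected via Lemma \ref{le:w-trivial}) separately. Origins are preserved because free reduction over $E$ coincides with path reduction in the folded graph $\Gamma$, which leaves $o(P)$ unchanged.

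The step I expect to be the main obstacle is controlling the size of the SLPs, not the correctness of the cancellation bookkeeping. A naive implementation that rebuilds a fresh prefix/suffix SLP at every concatenation node duplicates the reduced children and can blow up to exponential size across the production graph, since a non-terminal may be referenced many times. The remedy I would adopt is to carry out the entire reduction in the setting of composition systems, i.e. SLPs augmented with cut rules of the form $N\mapsto B[i{:}j]$, so that extracting a prefix or suffix costs only $O(1)$ new symbols that \emph{reference} the already-built reduced child rather than copying it; each original concatenation rule then contributes only a constant number of new symbols, keeping the total size linear in $|P|$, while the equality and longest-common-prefix subroutines run directly on composition systems. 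A final polynomial-time conversion from a composition system back to an ordinary reduced SLP completes the construction, and since there are $O(|P|)$ nodes, each processed with polynomially many polynomial-time queries, the overall running time is polynomial.
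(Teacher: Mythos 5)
The paper offers no proof of this statement --- it is imported verbatim from the cited reference --- and your argument is essentially the standard one underlying that citation: bottom-up processing of the production rules, computing the cancellation length at each concatenation node as a longest common prefix of compressed words via polynomial-time equality testing, and using composition systems (cut rules) to avoid the size blow-up from prefix/suffix extraction before converting back to an ordinary SLP. Your reconstruction is correct and takes the same route as the cited proof.
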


\section{The case of equal associated normal subgroups of finite index}
\label{se:case1}

In this section we consider the HNN extension $F \ast_\varphi t$
of the free group $F$ with equal associated subgroups $H$,
where $H$ is normal in $F$ and of finite index, and where the extension
is defined by an automorphism $\varphi \colon H \to H$.
Let $\Gamma=(V,E^\pm,\mu,r)$ be the subgroup graph for $H$.
These assumptions imply that
\begin{itemize}
\item 
$\Gamma$ is finite.
\item 
$\Gamma$ is $X$-regular.
\item 
For any $u,v\in V$ there is an automorphism $\varphi_{u,v}:\Gamma\to\Gamma$ 
satisfying $\varphi_{u,v}(u) = v$.
\end{itemize}
Since the group $G$ is fixed, we treat the following data as part 
of its description.
\begin{itemize}
\item 
The subgroup graph $\Gamma=(V,E^\pm,r,\mu)$ for $H$.
\item 
A set of edges $T\subseteq E^+$ defining a
spanning tree in $\Gamma$ as described in Section \ref{se:sbgp-basis}.
\item 
For every $e\in E^+\setminus T$ we have
\begin{itemize}
\item 
the circuit $p_e$ in $\Gamma$ corresponding to $e$ defined by \eqref{eq:p_e};
\item 
the circuit $p_e^\varphi$ in $\Gamma$ satisfying 
$\mu(p_e^\varphi) = \varphi(\mu(p_e))$;
\item 
the circuit $p_e^{\varphi^{-1}}$ in $\Gamma$ satisfying
$\mu(p_e^{\varphi^{-1}}) = \varphi^{-1}(\mu(p_e))$;
\item 
an SLP $P_e^\varphi$ satisfying
$\val(P_e^\varphi) = p_e^\varphi$;
\item 
an SLP $P_{e^{-1}}^\varphi$ satisfying
$\val(P_{e^{-1}}^\varphi) = (p_e^\varphi)^{-1}$;
\item 
an SLP $P_e^{\varphi^{-1}}$ satisfying
$\val(P_e^{\varphi^{-1}}) = p_e^{\varphi^{-1}}$;
\item 
an SLP $P_{e^{-1}}^{\varphi^{-1}}$ satisfying
$\val(P_{e^{-1}}^{\varphi^{-1}}) = (p_e^{\varphi^{-1}})^{-1}$.
\end{itemize}
\end{itemize}
Define a constant
\begin{equation}\label{eq:C}
C = C_\varphi =
\sum_{e\in E^+\setminus T}
|P_e^\varphi|+
|P_{e^{-1}}^\varphi|+
|P_e^{\varphi^{-1}}|+
|P_{e^{-1}}^{\varphi^{-1}}| +2.
\end{equation}

Now we describe the algorithm for the word problem in $F\ast_\varphi t$.
First, a given word  \eqref{eq:w} is translated into an alternating sequence
\begin{equation}\label{eq:slp-word}
P_0, t^{\varepsilon_1},
P_1, \dots,
P_{k-1}, t^{\varepsilon_k},P_k
\end{equation}
of letters $t^{\pm 1}$ and straight-line programs $P_i$
over the alphabet $E^\pm$
(a formal alphabet of edges of $\Gamma$)
satisfying the following conditions:
\begin{itemize}
\item 
$\val(P_i)$ is a path in $\Gamma$ starting from $r$,
\item 
$\mu(\val(P_i))=w_i$,
\end{itemize}
using Lemma \ref{le:CP-w}.
By Lemma \ref{le:CP-w}, \eqref{eq:slp-word}
can be computed in linear time.
All further computations are performed on the sequence \eqref{eq:slp-word}.

\subsection{Application of $\varphi^{\pm1}$ to an SLP}

\begin{proposition}\label{pr:apply-phi}
Let $P$ be an SLP over $\Gamma$ such that $\val(P)$ is a 
circuit in $\Gamma$ from $r$ to $r$.
Then $\mu(\val(P)) \in H$ and $\varphi$ is applicable to
$\mu(\val(P))$. 
There is an algorithm that
for a given $P$ produces an SLP $P'$ in $O(|P|)$ time
satisfying the following conditions:
\begin{itemize}
\item[(a)]
$\val(P')$ is a circuit based at $r$;
\item[(b)]
$\mu(\val(P')) = \varphi(\mu(\val(P)))$;
\item[(c)]
$|P'|\le |P| + \sum_{e\in E^+\setminus T}
|P_e^\varphi|+
|P_{e^{-1}}^\varphi|$.
\end{itemize}
The same holds for $\varphi^{-1}$.
\end{proposition}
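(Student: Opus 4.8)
We have a subgroup graph $\Gamma$ for the equal associated normal subgroups $H_1 = H_2 = H$ of finite index. Because $H$ is normal of finite index, $\Gamma$ is finite, $X$-regular, and self-similar. We're given an SLP $P$ over the edge-alphabet $E^\pm$ such that $\val(P)$ is a circuit from $r$ to $r$, so $\mu(\val(P)) \in H$, and we want to produce an SLP $P'$ computing a circuit whose label is $\varphi(\mu(\val(P)))$.

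**The key idea: express everything through the basis circuits.** The precomputed data gives us, for each $e \in E^+ \setminus T$, the circuit $p_e$ with $w_e = \mu(p_e)$, and the image circuit $p_e^\varphi$ with $\mu(p_e^\varphi) = \varphi(w_e)$. By the basis proposition, $L(\Gamma) = H = \langle w_e \mid e \in E^+ \setminus T\rangle$. The homomorphism $\varphi$ commutes with products, so $\varphi$ is determined on $H$ by its values $\varphi(w_e)$ on the basis. The natural strategy is therefore: rewrite $\val(P)$, edge by edge, so that each non-tree edge $e$ (or its inverse) is replaced by the corresponding basis circuit $p_e$ (resp. $p_e^{-1}$), and then map each such basis circuit to $p_e^\varphi$ (resp. $(p_e^\varphi)^{-1}$); tree edges contribute nothing to the group element and should map to empty.

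**The plan.** I would build $P'$ by transforming $P$ at the level of productions. Walk through the non-terminals of $P$. For each terminal production $\delta(N) = e \in E^\pm$, I replace it according to which of four precomputed SLPs applies: if $e = e_0$ is a positive non-tree edge, splice in $P_{e_0}^\varphi$; if $e = e_0^{-1}$ is the inverse of a positive non-tree edge, splice in $P_{e_0^{-1}}^\varphi$ (whose value is $(p_{e_0}^\varphi)^{-1}$); if $e$ is a tree edge (or its inverse), replace it with a production computing the empty word $\varepsilon$. All binary productions $\delta(N) = (A,B)$ are kept unchanged (the concatenation structure is inherited). Since each terminal production is replaced by the root of a fixed precomputed SLP, and we merge in the (disjoint-support) production functions of those SLPs by \textsc{Assumption-II}, this takes $O(|P|)$ time and the construction is purely local.

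**Correctness and the obstacle.** For correctness I argue by induction on the production DAG that $\mu(\val(P'_N)) = \varphi(\mu(\val(P_N)))$ for the transformed value rooted at each $N$, using that $\varphi$ is a homomorphism so it distributes over the concatenation in binary productions and sends each generator $w_e$ to $\varphi(w_e) = \mu(p_e^\varphi)$ and each tree-edge label (which lies in the trivial part) to $\varepsilon$. Claim (a), that $\val(P')$ is again a circuit based at $r$, follows because each $p_e^\varphi$ is a circuit at $r$ and concatenation of circuits at $r$ is a circuit at $r$; the empty replacements for tree edges preserve this. Claim (b) is the induction conclusion at the root, and claim (c) is a bookkeeping count: each original non-terminal survives, and we add at most the total size $\sum_{e \in E^+\setminus T}(|P_e^\varphi| + |P_{e^{-1}}^\varphi|)$ of spliced-in pieces. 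The main subtlety I expect is connectivity: naively replacing a single edge $e$ inside a path by a circuit $p_e$ inserts a detour $[r, o(e)] \cdots [t(e), r]$, and I must check that the label is still $\varphi$ of the original and that concatenation points still match up so that $\val(P')$ is a genuine path. This works precisely because $\val(P)$ is a circuit at $r$ and the $p_e$ are the spanning-tree-based basis circuits: rewriting $\val(P)$ as a product of the $w_e$ is exactly the statement that $\Gamma$ is folded and $[r,v]_T$ paths cancel in complementary pairs, so the inserted detours telescope. I would make this precise by first observing that $\val(P)$ and the rewritten circuit $\prod p_e^{\pm}$ are equal as elements of $F(X)$, and that $\varphi$ respects this equality since both sides lie in $H$. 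The $\varphi^{-1}$ case is identical, using the precomputed SLPs $P_e^{\varphi^{-1}}$ and $P_{e^{-1}}^{\varphi^{-1}}$.
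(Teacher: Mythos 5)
Your proposal is correct and follows essentially the same route as the paper's proof: transform $P$ locally at the terminal productions, erasing tree edges and splicing in the precomputed SLPs $P_e^\varphi$, $P_{e^{-1}}^\varphi$ for non-tree edges, with correctness resting on the telescoping of the spanning-tree detours and the fact that $\varphi$ is a homomorphism. The paper states this more tersely (it does not spell out the telescoping argument you give for why the label equality holds in $F(X)$), but the construction and the size/time accounting are the same.
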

    
\begin{proof}
The algorithm modifies the terminals $e\in E^\pm$ in $P$.
It distinguishes two types of terminals.

\textsc{(Case-I)}
For each terminal $e \in T$ and each non-terminal $N$ such that
$\delta(N) = e$ or $\delta(N) = e^{-1}$, the algorithm redefines
$\delta(N)$ to be $\varepsilon$.
This effectively deletes all occurrences of $e$ and $e^{-1}$
from $\val(P)$.

\textsc{(Case-II)}  
For each terminal $e\in E^+\setminus T$ the algorithm performs 
the following steps:
\begin{itemize}
\item 
Add the description of $P_e^{\varphi}$ and $P_{e^{-1}}^{\varphi}$ 
to $P$.
\item 
For each non-terminal $N$ such that $\delta(N)=e$,
redefine $\delta(N)$ to be $\rootSLP(P_e^{\varphi})$
\item 
For each non-terminal $N$ such that $\delta(N)=e^{-1}$,
redefine $\delta(N)$ to be $\rootSLP(P_{e^{-1}}^{\varphi})$.
\end{itemize}
This effectively replaces each occurrence of $e$ and $e^{-1}$
in $\val(P)$ with $\val(P_{e}^\varphi)$ 
and $\val(P_{e^{-1}}^\varphi)$, respectively.
Finally, the resulting SLP is pruned to remove all nonessential 
non-terminals.
By construction, the obtained SLP $P'$ satisfies all three properties.
\end{proof}

\subsection{The word problem algorithm}
\label{se:WP-normal-fi}

\begin{proposition}\label{pr:main-step-case-I}
Consider a segment $P_{i-1},t^{-1},P_i,t,P_{i+1}$ in \eqref{eq:slp-word}.
It requires $O(|P_{i-1}|+|P_{i}|+|P_{i+1}|)$ time to check if
$P_i$ defines an element in $H$ 
(i.e., if $\mu(\val(P_i)) \in H$)
and, if so, to construct
an SLP $P$ satisfying the following properties:
\begin{itemize}
\item[(a)]
$\val(P)$ is a path in $\Gamma$ that starts at $r$,
\item[(b)]
$\mu(\val(P))\ =_G\ 
\mu(\val(P_{i-1})) 
\cdot
t^{-1} \mu(\val(P_{i})) t
\cdot
\mu(\val(P_{i+1}))$,
\item[(c)]
$|P| \le |P_{i-1}| + |P_i| + |P_{i+1}| + C$.
\end{itemize}
The same holds for segments $P_{i-1},t,P_i,t^{-1},P_{i+1}$, when
$P_i$ defines an element in $H$.
\end{proposition}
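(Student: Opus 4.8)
The plan is to realize the single Britton reduction $t^{-1}w_i t =_G \varphi(w_i)$ -- valid precisely because $w_i=\mu(\val(P_i))\in H_1$ -- entirely at the level of SLPs over the edge alphabet of $\Gamma$, using the self-similarity of $\Gamma$ to keep the output a genuine path based at $r$. First I would perform the membership test. Since $\Gamma$ is the $X$-regular (deterministic) subgroup graph of $H=H_1=L(\Gamma)$, path-reduction of $\val(P_i)$ leaves its endpoints unchanged and yields a reduced path from $r$ labeled $\overline{\mu(\val(P_i))}$; as $\Gamma$ is folded and deterministic this is the unique path from $r$ with that label, so $\mu(\val(P_i))\in H$ if and only if $\val(P_i)$ is a circuit, i.e.\ $t(P_i)=r$. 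By Lemma~\ref{le:first-last} the vertex $t(P_i)=t(\last(P_i))$ is computable in $O(|P_i|)$ time, so the test costs $O(|P_i|)$.

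Assuming the test succeeds, I would build $P$ in three moves. First, apply Proposition~\ref{pr:apply-phi} to $P_i$ to obtain an SLP $P_i'$ whose value is a circuit at $r$ with $\mu(\val(P_i'))=\varphi(\mu(\val(P_i)))$, of size at most $|P_i|+\sum_{e\in E^+\setminus T}(|P_e^\varphi|+|P_{e^{-1}}^\varphi|)$, in $O(|P_i|)$ time. Second, set $v=t(P_{i-1})$ (taking $v=r$ when $\val(P_{i-1})=\varepsilon$) and, invoking self-similarity together with Proposition~\ref{pr:Suv-SLP}, replace $P_i'$ and $P_{i+1}$ by their shifts $S_{r,v}(P_i')$ and $S_{r,v}(P_{i+1})$; this takes linear time and does not change SLP sizes. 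Third, return $P=P_{i-1}\circ S_{r,v}(P_i')\circ S_{r,v}(P_{i+1})$.

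The verifications are then routine. Because $S_{r,v}$ is the automorphism sending $r$ to $v$, it carries the circuit $\val(P_i')$ at $r$ to a circuit at $v$ and carries the path $\val(P_{i+1})$ from $r$ to a path starting at $v$; since $\val(P_{i-1})$ runs from $r$ to $v$, the three pieces concatenate into one path based at $r$, giving (a). Shifts preserve labels, so $\mu(\val(S_{r,v}(P_i')))=\varphi(w_i)$ and $\mu(\val(S_{r,v}(P_{i+1})))=w_{i+1}$, whence $\mu(\val(P))=w_{i-1}\circ\varphi(w_i)\circ w_{i+1}$, which equals $w_{i-1}\,t^{-1}w_i t\,w_{i+1}$ in $G$ by the Britton reduction; this is (b). For (c), concatenation adds two non-terminals while the shifts preserve size, so $|P|\le |P_{i-1}|+|P_i|+|P_{i+1}|+\sum_{e\in E^+\setminus T}(|P_e^\varphi|+|P_{e^{-1}}^\varphi|)+2\le |P_{i-1}|+|P_i|+|P_{i+1}|+C$ by \eqref{eq:C}. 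Every step is linear in the relevant sizes, so the total running time is $O(|P_{i-1}|+|P_i|+|P_{i+1}|)$. The symmetric segment $P_{i-1},t,P_i,t^{-1},P_{i+1}$ with $w_i\in H_2=H$ is handled identically after replacing $\varphi$ by $\varphi^{-1}$, and the bound persists since $C$ also dominates $\sum_{e\in E^+\setminus T}(|P_e^{\varphi^{-1}}|+|P_{e^{-1}}^{\varphi^{-1}}|)$.

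The main obstacle is the basepoint mismatch in the second move: the naive concatenation $P_{i-1}\circ P_i'\circ P_{i+1}$ fails condition (a), because $\val(P_i')$ and $\val(P_{i+1})$ both start at $r$ rather than at the endpoint $v$ of $\val(P_{i-1})$. Self-similarity of $\Gamma$ -- available here exactly because $H$ is normal of finite index -- is what lets me relocate these two pieces to basepoint $v$ in linear time, without size blow-up and while preserving their labels, and this is precisely what makes the whole reduction step fit the stated linear bound.
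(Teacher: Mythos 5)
Your proof is correct and follows essentially the same route as the paper's: test membership by checking that the path $\val(P_i)$ returns to $r$, apply Proposition~\ref{pr:apply-phi} to realize the Britton step, relocate the remaining pieces with shift operators, and concatenate, with the same size and time accounting. The only (harmless) difference is that you shift both $P_i'$ and $P_{i+1}$ by the single automorphism $S_{r,v}$ with $v=t(P_{i-1})$, which works because $\val(P_i')$ is a circuit and hence ends where it starts, whereas the paper recomputes the second shift basepoint as $t(\last(P_i''))$ — these coincide.
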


\begin{proof}
By Lemmas \ref{le:first-last} and \ref{le:SLP-connected},
we can check if $\val(P_i)$ defines a path that
starts and ends at $r$ (i.e., if $\mu(\val(P_i)) \in H$)
in $O(|P_i|)$ time.
By Proposition \ref{pr:apply-phi}, 
in $O(|P_i|)$ time we can compute an SLP $P_i'$ satisfying
\begin{itemize}
\item 
$\val(P_i')$ a circuit in $\Gamma$ from $r$ to $r$, and
\item 
$\mu(\val(P_i')) =_G t^{-1} \mu(\val(P_i)) t$.
\end{itemize}
Hence, the word
$$
\mu(\val(P_{i-1}))\mu(\val(P_{i}'))\mu(\val(P_{i+1}))
$$
defines the same element as the right-hand side of (b).
Note that, in general,
$\val(P_{i-1}) \circ \val(P_{i}') \circ \val(P_{i+1}))$
does not define a continuous path in $\Gamma$; 
there may be up to two points of discontinuity.

To create a required SLP $P$, it remains to 
properly concatenate the paths
$\val(P_{i-1})$, $\val(P_{i}')$, $\val(P_{i+1})$,
which can be done using shift operations.
Use Lemma \ref{le:first-last} and Proposition \ref{pr:Suv-SLP}
to compute 
$$
v = t(\last(P_{i-1})),
\ P_i''=S_{r,v}(P_i')
\ \mbox{ and }\ 
P_{i+1}'=S_{r,v}(P_{i+1}).
$$
Observe that $\val(P_i'')$ is a circuit and $t(\last(P_{i}'')) = v$.
Hence, by definition of $P_i''$ and $P_{i+1}'$,
the concatenation
$\val(P_{i-1}) \circ \val(P_i'') \circ \val(P_{i+1}')$
is a path in $\Gamma$ that starts at $r$.
Its label defines the same element as the right-hand side of (b)
because shift operations preserve labels.
Therefore, the SLP $P = P_{i-1} \circ P_i''\circ P_{i+1}'$
satisfies (a) and (b).

Finally, by construction,
$$
|P_i''| = |P_i'| \le |P_i| + \sum_{e\in E^+\setminus T}
|P_e^\varphi|+
|P_{e^{-1}}^\varphi|
\ \mbox{ and }\ 
|P_{i+1}'| = |P_{i+1}|
$$
because applying shift operations does not change the size of an SLP.
Thus, 
$$
|P| \le 
|P_{i-1}|+|P_{i}| + |P_{i+1}|+
\sum_{e\in E^+\setminus T}
|P_e^\varphi|+
|P_{e^{-1}}^\varphi| +2
\le |P_{i-1}| + |P_{i}| + |P_{i+1}| + C
$$
and $P$ satisfies (c).
\end{proof}

\begin{theorem}\label{th:main-case1}
Suppose that $H$ is a normal subgroup of $F$ of finite index
and let $\varphi:H\to H$ be an automorphism.
Then the word problem for the HNN extension $F\ast_\varphi t$
is decidable in polynomial time.
\end{theorem}
    
\begin{proof}
Consider a word $w$ of type \eqref{eq:w}.
If the syllable length $k$ of $w$ is zero, then 
we directly check if $w_0$ is trivial in the base group $F$.

Suppose that $k\ge 1$.
Translate $w$ into a sequence \eqref{eq:slp-word}, which can be done 
in $O(|w|)$ time.
Then apply a sequence of Britton's reductions 
using Proposition \ref{pr:main-step-case-I} for a single reduction step.
If $w=_G1$, then the process produces a single SLP $P^\ast$
satisfying 
$$
|P^\ast| \le \sum_{i=0}^k |P_i| + \tfrac{k}{2} C
\le
(C+1)\sum_{i=0}^k |P_i|
$$
which is $O(|w|)$ because $C$ is a fixed parameter of the group.
The time complexity of reducing \eqref{eq:slp-word}
to $P^\ast$ can be bounded by $O(k|w|)$
or simply $O(|w|^2)$.
Finally, it remains to check if $\mu(\val(P^\ast)) = \varepsilon$ in $F$.
By Theorem \ref{th:reducing-SLP}, that can be done in polynomial time.
Therefore, the total time-complexity of the described procedure 
can be bounded by a polynomial.
\end{proof}

\section{The case of equal associated subgroups of finite index}
\label{se:normalizable}

In this section, we generalize the algorithm from Section~\ref{se:WP-normal-fi} 
to the case when $\varphi:H\to H$ can be restricted to a normal subgroup 
of $F$ of finite index, in which case we say that $\varphi$ is \emph{normalizable}.

\subsection{$\varphi$-invariant subgroups}

Let $H\le F$ and $\varphi \in\Aut(H)$.
We say that $H'\le H$ is \emph{$\varphi$-invariant} if 
$\varphi(H')= H'$, i.e., if $\varphi|_{H'} \in \Aut(H')$.

\begin{lemma}\label{le:phi_image}
$H'\le H$ is \emph{$\varphi$-invariant} if and only if 
$\varphi(H')\le H'$ and $\varphi^{-1}(H')\le H'$.
\end{lemma}

\begin{proof}
Applying $\varphi$ to $\varphi^{-1}(H') \le H'$, 
we obtain $H' \le \varphi(H')$. Together with the assumption 
$\varphi(H') \le H'$ this implies that $H' = \varphi(H')$.
\end{proof}

\begin{lemma}\label{le:join-meet}
If 
$\varphi(H')\le H'$ and
$\varphi(H'')\le H''$, then 
\begin{itemize}
\item[(a)]
$\varphi(\gp{H' \cup H''})\le \gp{H'\cup H''}$, and
\item[(b)]
$\varphi(H' \cap H'')\le H'\cap H''$.
\end{itemize}
\end{lemma}

\begin{proof}
Indeed, for any $g\in F$
\begin{align*}
g\in H'\cup H''
&\ \Rightarrow\ 
\varphi(g) \in \varphi(H') 
\ \mbox{ or }\ 
\varphi(g) \in \varphi(H'') \\
&\ \Rightarrow\ 
\varphi(g) \in \varphi(H') \cup \varphi(H'') \subseteq H'\cup H'',\\
g\in H'\cap H''
&\ \Rightarrow\ 
\varphi(g) \in \varphi(H') 
\ \mbox{ and }\ 
\varphi(g) \in \varphi(H'') \\
&\ \Rightarrow\ 
\varphi(g) \in \varphi(H') \cap \varphi(H'') \le H'\cap H'',
\end{align*}
which implies (a) and (b).
\end{proof}

\begin{lemma}[Join and meet operations]
If $H',H''$  are $\varphi$-invariant, then 
\begin{itemize}
\item[(a)]
$\gp{H'\cup H''}$ is $\varphi$-invariant, and
\item[(b)]
$H'\cap H''$ is $\varphi$-invariant.
\end{itemize}
\end{lemma}

\begin{proof}
Follows from Lemma \ref{le:phi_image} and 
Lemma \ref{le:join-meet} applied 
to $\varphi$ and $\varphi^{-1}$.
\end{proof}

Thus, the set of all $\varphi$-invariant subgroups, denoted by $L_\varphi$,
has a structure of a \emph{bounded lattice},
with the maximum element $H$ and the minimum element $\{1\}$.
Let us consider the set
$$
L_\varphi^\ast = \Set{N\in L_\varphi}{N\unlhd F}.
$$
$L_\varphi^\ast$ is not empty, as it contains $\{1\}$.
Furthermore, it is easy to check that it is a sublattice of $L_\varphi$.
Denote by $M_\varphi$ the maximum element of $L_\varphi^\ast$.

\subsection{$\varphi$-invariant normal subgroups of finite index}
\label{se:subsectionnormalizable}

Now, let $H\le F$ be a subgroup of finite index and $\varphi\in\Aut(H)$.
We say that $\varphi$ is \emph{normalizable}
if $\varphi$ can be restricted
to a normal subgroup $N\unlhd F$ of finite index
(i.e., if $M_\varphi$ has finite index).

\begin{problem}
Is it true that every $\varphi$ is normalizable?
\end{problem}

We suspect that the answer is negative in general.
However, how can one find such a subgroup $N$ if it exists?
Let us review the properties of a required subgroup $N$.
It should satisfy the following three properties:
\begin{itemize}
\item 
$N$ is normal,
\item 
$N$ has finite index,
\item 
$\varphi^{\pm 1}(N) \leq N$.
\end{itemize}
Let us define the following sequence of subgroups:
\begin{equation}\label{eq:Hi-seq}
\begin{array}{l}
H_0=H,\\
H_{i+1}=
\bigcap_c c^{-1} H_i c 
\ \cap\ \varphi(H_i) 
\ \cap\ \varphi^{-1}(H_i)
\ \ \ \mbox{ for }i\ge 0.
\end{array}
\end{equation}

\begin{lemma}\label{le:H-k-finiteindex}
$[F:H_k]<\infty$ for every $k\ge 0$.
\end{lemma}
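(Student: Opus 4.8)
The plan is to prove $[F:H_k]<\infty$ by induction on $k$. The base case $k=0$ is immediate since $H_0=H$ has finite index by hypothesis. For the inductive step, I would assume $[F:H_i]<\infty$ and show that $H_{i+1}$, as defined in \eqref{eq:Hi-seq}, also has finite index. The key observation is that $H_{i+1}$ is built from $H_i$ using three operations that each preserve finite index, and then taking intersections, where a finite intersection of finite-index subgroups again has finite index.

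The three ingredients I would establish are as follows. First, the normal core $\bigcap_{c\in F} c^{-1}H_i c$ has finite index: this is a standard fact, since the core equals the kernel of the action of $F$ on the finitely many cosets $F/H_i$, so $F/\Core$ embeds into the finite symmetric group on $[F:H_i]$ points and hence $[F:\bigcap_c c^{-1}H_i c]\le [F:H_i]!$. Second, I would note that $\varphi(H_i)$ and $\varphi^{-1}(H_i)$ have finite index in $F$. Here I must be slightly careful: $\varphi$ is an automorphism of $H$, not of $F$, so $\varphi(H_i)$ is a subgroup of $H$, not literally an image of $F$. But since $\varphi$ is a bijection $H\to H$ and $H_i\le H$, we have $[H:\varphi(H_i)]=[H:H_i]$, which is finite because $[H:H_i]\le[F:H_i]<\infty$. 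Combining with $[F:H]<\infty$ gives $[F:\varphi(H_i)]=[F:H]\cdot[H:\varphi(H_i)]<\infty$, and symmetrically for $\varphi^{-1}(H_i)$.

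Finally I would assemble these: $H_{i+1}$ is the intersection of the three finite-index subgroups $\bigcap_c c^{-1}H_i c$, $\varphi(H_i)$, and $\varphi^{-1}(H_i)$, and a finite intersection of finite-index subgroups has finite index (by the standard bound $[F:\bigcap_j K_j]\le\prod_j[F:K_j]$). This completes the induction. I would present the explicit index bounds only loosely, since for the lemma statement mere finiteness suffices.

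The step I expect to require the most care is the treatment of $\varphi(H_i)$ and $\varphi^{-1}(H_i)$, precisely because $\varphi$ is an automorphism of the proper subgroup $H$ rather than of $F$; one must factor the index through $H$ as $[F:\varphi(H_i)]=[F:H]\cdot[H:\varphi(H_i)]$ and use that $\varphi$ preserves indices within $H$. The normal-core argument and the intersection bound are routine. One should also verify that all the subgroups $\varphi(H_i)$ remain inside $H$ so that the $\varphi$-images are well-defined at each stage, which follows inductively since $H_i\le H_0=H$ for all $i$.
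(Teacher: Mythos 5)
Your proof is correct and follows essentially the same route as the paper's: induction on $k$, with the index of $\varphi^{\pm1}(H_i)$ controlled by factoring through $H$ (using that $\varphi\in\Aut(H)$ preserves indices within $H$ and $[F:H]<\infty$), and the observation that $H_{i+1}$ is then a finite intersection of finite-index subgroups. Your explicit justification that the normal core has finite index and that each $H_i$ stays inside $H$ is slightly more detailed than the paper's, but the argument is the same.
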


\begin{proof}
Induction on $k$.
By assumption, $H_0=H$ has finite index.
Since, $\varphi^{\pm 1}(H) = H$ we have
$$H_{1}=\bigcap_c c^{-1} H_0 c$$
which has finite index in $F$.
Assume that the statement holds for $H_{k-1}$ and consider $H_k$.
\begin{align*}
[F:H_{k-1}]<\infty&\ \ \Rightarrow\ \ [H:H_{k-1}]<\infty\\
&\ \ \Rightarrow\ \ [H:\varphi(H_{k-1})]<\infty
&&\mbox{(because $\varphi\in\Aut(H)$)} \\
&\ \ \Rightarrow\ \ [F:\varphi(H_{k-1})]<\infty.
\end{align*}
Similarly $[F:\varphi^{-1}(H_{k-1})]<\infty$.
Hence, $H_k$ is an intersection of finitely many subgroups
of finite index and, hence, has finite index itself.
\end{proof}

\begin{lemma}\label{le:M-Hi}
$M_\varphi\le H_k$ for every $k\ge 0$.
\end{lemma}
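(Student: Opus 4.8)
The plan is to prove $M_\varphi \le H_k$ for every $k\ge 0$ by induction on $k$, exploiting the recursive definition \eqref{eq:Hi-seq} of the sequence $H_i$ together with the two defining features of $M_\varphi$: it is the maximum element of $L_\varphi^\ast$, hence it is $\varphi$-stable (so $\varphi(M_\varphi)\subseteq M_\varphi$) and normal in $F$ (so $c^{-1}M_\varphi c \subseteq M_\varphi$ for every $c\in F$). The base case $k=0$ is immediate, since $M_\varphi \le H$ by definition of $L_\varphi$ (every $\varphi$-stable subgroup is contained in $H$), and $H_0 = H$.

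For the inductive step, assume $M_\varphi \le H_{k}$ and recall that
$$
H_{k+1} = \bigcap_c c^{-1} H_k c \,\cap\, \varphi(H_k) \,\cap\, \varphi^{-1}(H_k).
$$
I would show $M_\varphi$ is contained in each of the three factors separately. For the intersection $\bigcap_c c^{-1}H_k c$: fix $c\in F$; since $M_\varphi$ is normal we have $M_\varphi = c^{-1} M_\varphi c \subseteq c^{-1} H_k c$ using the inductive hypothesis $M_\varphi\le H_k$, and as $c$ was arbitrary this gives $M_\varphi \le \bigcap_c c^{-1}H_k c$. For the factor $\varphi(H_k)$: since $M_\varphi$ is $\varphi$-stable, $\varphi|_{M_\varphi}$ is an automorphism of $M_\varphi$, so in fact $\varphi(M_\varphi) = M_\varphi$; applying $\varphi$ to $M_\varphi \le H_k$ yields $M_\varphi = \varphi(M_\varphi) \le \varphi(H_k)$. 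The factor $\varphi^{-1}(H_k)$ is handled symmetrically, using $\varphi^{-1}(M_\varphi) = M_\varphi$, which also follows from $\varphi$-stability together with $M_\varphi$ being finitely generated (or simply from $\varphi|_{M_\varphi}$ being an automorphism). Combining the three containments gives $M_\varphi \le H_{k+1}$, completing the induction.

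The one point deserving care — and the likeliest source of a gap — is the claim $\varphi(M_\varphi) = M_\varphi$ rather than merely $\varphi(M_\varphi)\subseteq M_\varphi$. The definition of $\varphi$-stable only provides the inclusion $\varphi(M_\varphi)\subseteq M_\varphi$, so I must argue equality before I can apply $\varphi^{-1}$ and write $M_\varphi \le \varphi^{-1}(H_k)$. The cleanest justification is that $\varphi$-stability says precisely $\varphi|_{M_\varphi}\in\Aut(M_\varphi)$, i.e. $\varphi$ restricts to an \emph{automorphism} of $M_\varphi$, which is by definition a bijection; hence $\varphi(M_\varphi)=M_\varphi$ and equally $\varphi^{-1}(M_\varphi)=M_\varphi$. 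With that observation in hand both the $\varphi$-factor and the $\varphi^{-1}$-factor are dispatched immediately, and no further subtlety arises.
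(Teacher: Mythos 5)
Your proof is correct and follows essentially the same route as the paper: induction on $k$, using normality of $M_\varphi$ for the conjugation factor and $\varphi(M_\varphi)=M_\varphi=\varphi^{-1}(M_\varphi)$ for the other two factors of the intersection defining $H_{k+1}$. The point you flag about needing equality $\varphi(M_\varphi)=M_\varphi$ rather than mere inclusion is a reasonable observation, and your resolution (reading $\varphi$-stability as $\varphi|_{M_\varphi}\in\Aut(M_\varphi)$, as the paper's definition states) matches what the paper implicitly uses.
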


\begin{proof}
Induction on $k$. The statement holds for $k=0$.
Assume that the statement holds for $k-1$, i.e., $M_\varphi\le H_{k-1}$.
Then the following holds.
\begin{align*}
M_\varphi &= \cap_c c^{-1} M_\varphi c\le \cap_c c^{-1} H_{k-1} c. \\
M_\varphi &=\varphi(M_\varphi)\le \varphi(H_{k-1}).\\
M_\varphi &=\varphi^{-1}(M_\varphi)\le \varphi^{-1}(H_{k-1}).
\end{align*}
Therefore,
$M_\varphi\le 
\cap_c c^{-1} H_{k-1} c \cap \varphi(H_{k-1}) \cap \varphi^{-1}(H_{k-1}) 
= H_k$.
\end{proof}

\begin{lemma}
$\cap_i H_i = M_\varphi$.
\end{lemma}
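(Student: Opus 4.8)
The plan is to prove the two inclusions separately. The inclusion $M_\varphi \le \cap_i H_i$ is immediate from Lemma \ref{le:M-Hi}, which already gives $M_\varphi \le H_k$ for every $k$. For the reverse inclusion, I would set $K = \cap_i H_i$ and show that $K$ itself belongs to $L_\varphi^\ast$; the maximality of $M_\varphi$ then forces $K \le M_\varphi$, which completes the proof.

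To show $K \in L_\varphi^\ast$ it suffices to verify that $K$ is a $\varphi$-stable subgroup of $H$ that is normal in $F$. Membership $K \le H$ is immediate since $K \le H_0 = H$. For normality, fix $x \in K$ and $c \in F$. Since $c^{-1} H_i c$ appears as a factor in the intersection defining $H_{i+1}$ in \eqref{eq:Hi-seq}, we have $x \in K \subseteq H_{i+1} \subseteq c^{-1} H_i c$, hence $c x c^{-1} \in H_i$; as this holds for every $i$, we get $c x c^{-1} \in K$, and letting $c$ range over $F$ shows $K \unlhd F$. For $\varphi$-stability, note that $\varphi^{-1}(H_i)$ is also a factor of $H_{i+1}$, so $x \in K \subseteq H_{i+1} \subseteq \varphi^{-1}(H_i)$ yields $\varphi(x) \in H_i$ for every $i$; since $\varphi \in \Aut(H)$ guarantees $\varphi(x) \in H = H_0$, we conclude $\varphi(x) \in K$, i.e. $\varphi(K) \subseteq K$. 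The symmetric argument using the factor $\varphi(H_i)$ gives $\varphi^{-1}(K) \subseteq K$, so in fact $\varphi$ restricts to an automorphism of $K$.

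The argument is essentially bookkeeping once one reads off the correct factors of the recursion \eqref{eq:Hi-seq}, so I expect no serious obstacle. The one point that requires care is keeping the directions of conjugation and of $\varphi$ versus $\varphi^{-1}$ consistent: normality of $K$ must be extracted from the conjugation factor $c^{-1} H_i c$, while forward $\varphi$-stability of $K$ must be extracted from the $\varphi^{-1}(H_i)$ factor (and conversely for $\varphi^{-1}(K) \subseteq K$). One should also not overlook checking $\varphi(x) \in H_0$, which is exactly what $\varphi \in \Aut(H)$ provides and is what lets the per-level containments $\varphi(x) \in H_i$ for $i \ge 1$ assemble into $\varphi(x) \in K$. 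With these checks in place, $K \in L_\varphi^\ast$, hence $K \le M_\varphi$, and together with the first inclusion this gives $\cap_i H_i = M_\varphi$.
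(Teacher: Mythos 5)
Your proof is correct and follows essentially the same route as the paper: both identify $K=\cap_i H_i$ as an element of $L_\varphi^\ast$ and then combine the maximality of $M_\varphi$ with Lemma \ref{le:M-Hi} to get equality. In fact, your handling of the $\varphi$-stability step is more careful than the paper's own: the paper's displayed chain asserts $\varphi(x)\in\varphi(H_i)\subseteq H_{i+1}$, but that inclusion is backwards (the recursion \eqref{eq:Hi-seq} gives $H_{i+1}\subseteq\varphi(H_i)$, not the reverse), whereas you correctly extract $\varphi(x)\in H_i$ from $x\in H_{i+1}\subseteq\varphi^{-1}(H_i)$, which is exactly the factor one must use; you also spell out the normality verification that the paper dismisses as ``easy to see.''
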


\begin{proof}
Denote $\cap_i H_i$ by $L$. It is easy to see that it is 
a normal subgroup of $F$. Moreover,
\begin{align*}
x\in L
&\ \ \Rightarrow\ \ x\in H_i\le \varphi^{-1}(H_{i-1})\ \forall i\ge 1\\
&\ \ \Rightarrow\ \ \varphi(x)\in H_{i-1} \ \forall i\ge 1\\
&\ \ \Rightarrow\ \ \varphi(x)\in L.
\end{align*}
Hence, $\varphi(L) \le L$ and, similarly, $\varphi^{-1}(L) \le L$.
Therefore, $L$ is $\varphi$-invariant and $L \in L_\varphi^\ast$.
By Lemma~\ref{le:M-Hi}, we have $M_\varphi \le L$.
Thus, $L = M_\varphi$.
\end{proof}

We say that the sequence \eqref{eq:Hi-seq} \emph{stabilizes} 
if $H_{i+1}=H_i$ for some index $i$.
The next lemma follows from the definition of $H_{i+1}$.

\begin{lemma}\label{le:Hi-stabilizes}
If $H_{i+1}=H_i$, then $H_i=M_\varphi$.
\end{lemma}

\begin{proposition}
$\varphi$ is normalizable if and only if 
the sequence $H_0\ge H_1\ge H_2\ge \dots$ 
stabilizes.
\end{proposition}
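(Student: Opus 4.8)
The plan is to prove the two implications separately, leaning on the chain of lemmas already established about the sequence $\{H_i\}$ and the subgroup $M_\varphi$. The backward direction is immediate: if the sequence stabilizes, say $H_{i+1}=H_i$ for some $i$, then Lemma~\ref{le:Hi-stabilizes} gives $H_i=M_\varphi$, and since every $H_k$ has finite index in $F$, so does $M_\varphi$. By definition this means $\varphi$ is normalizable, so I would dispose of this direction first.

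For the converse, I would assume $\varphi$ is normalizable, i.e.\ $[F:M_\varphi]<\infty$, and aim to produce an index $i$ with $H_{i+1}=H_i$. The first thing I would record is that the chain is decreasing: taking $c=1$ in the intersection $\cap_c c^{-1}H_i c$ appearing in the definition of $H_{i+1}$ shows $H_{i+1}\subseteq H_i$, so $H_0\supseteq H_1\supseteq\cdots$. Combining this with $M_\varphi\le H_i$ for all $i$ (Lemma~\ref{le:M-Hi}), I obtain a descending chain $H_0\supseteq H_1\supseteq\cdots\supseteq M_\varphi$ wedged between $F$ and the finite-index subgroup $M_\varphi$.

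The key step is then a finiteness argument on indices. Because $M_\varphi\le H_{i+1}\le H_i\le F$ and $[F:M_\varphi]<\infty$, each index $[H_i:M_\varphi]$ is a positive integer bounded above by $[F:M_\varphi]$, and the multiplicativity relation $[H_i:M_\varphi]=[H_i:H_{i+1}]\,[H_{i+1}:M_\varphi]$ shows that $i\mapsto[H_i:M_\varphi]$ is non-increasing. A non-increasing sequence of positive integers is eventually constant, so there is an index $i_0$ with $[H_{i_0}:M_\varphi]=[H_{i_0+1}:M_\varphi]$; feeding this back into the same multiplicativity relation forces $[H_{i_0}:H_{i_0+1}]=1$, hence $H_{i_0}=H_{i_0+1}$, and the chain stabilizes.

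I do not expect any real obstacle here; the only point requiring care is that all the index bookkeeping is legitimate, which it is precisely because the normalizability hypothesis supplies the finite bound $[F:M_\varphi]$ on every $[H_i:M_\varphi]$. I note that one could instead try to argue from $\cap_i H_i=M_\varphi$ directly, but this would implicitly require a descending-chain (Noetherian) property of the finite-index subgroup lattice; the integer-index argument above is cleaner and self-contained, so that is the route I would take.
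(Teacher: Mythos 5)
Your proof is correct and takes essentially the same approach as the paper: the backward direction is Lemma~\ref{le:Hi-stabilizes}, and the forward direction uses the descending chain $H_0\supseteq H_1\supseteq\cdots\supseteq M_\varphi$ together with $[F:M_\varphi]<\infty$ to force stabilization (the paper phrases this as $[H_i:H_{i+1}]\ge 2$ whenever the inclusion is strict, yielding the explicit bound $H_{\lceil\log_2 m\rceil}=M_\varphi$, which is the same index bookkeeping you carry out).
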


\begin{proof}
The direction ``$\Leftarrow$'' follows directly from
Lemmas~\ref{le:H-k-finiteindex} and~\ref{le:Hi-stabilizes}.

To prove the converse, assume that $\varphi$ is normalizable.
Then $[F : M_\varphi] = m < \infty$ and
\[
H_{\lceil \log_2 m \rceil} = M_\varphi,
\]
since $[H_i : H_{i+1}] \ge 2$ whenever $H_{i+1} \neq H_i$.
\end{proof}

\subsection{The case when $\varphi$ extends to an automorphism of $F$}

Recall that a subgroup  $N\le F$ is \emph{characteristic} if 
$\varphi(N)\leq N$ for every $\varphi \in \Aut(F)$. 
It is easy to see that every characteristic subgroup is normal.

\begin{lemma}\label{le:characteristic}
Let $H$ be a finite index subgroup of $F$.
Then $H$ contains a characteristic subgroup of $F$ of finite index.
\end{lemma}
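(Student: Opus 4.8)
The plan is to exploit the finiteness of the index of $H$ to produce only finitely many subgroups of $F$ of that index, and then intersect them all. The key classical fact is that a finitely generated group has only finitely many subgroups of any fixed finite index $m$; since $F$ is free of finite rank, this applies. Let $m=[F:H]$ and let $H=K_1,K_2,\dots,K_s$ be the complete (finite) list of subgroups of $F$ of index at most $m$ (or exactly $m$; either works). I would then define
\begin{equation}\label{eq:char-core}
N=\bigcap_{j=1}^{s} K_j.
\end{equation}
This $N$ is a finite intersection of finite-index subgroups, hence itself of finite index in $F$, and clearly $N\le H$ since $H$ appears among the $K_j$.

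The heart of the argument is showing $N$ is characteristic. The point is that any $\psi\in\Aut(F)$ permutes the collection of subgroups of $F$ of a given index: if $K$ has index $d$ then so does $\psi(K)$, because $\psi$ is a bijection of $F$ carrying cosets of $K$ bijectively to cosets of $\psi(K)$. Therefore $\psi$ induces a permutation of the finite set $\{K_1,\dots,K_s\}$, and consequently
\begin{equation}\label{eq:char-inv}
\psi(N)=\psi\Bigl(\bigcap_{j=1}^{s}K_j\Bigr)=\bigcap_{j=1}^{s}\psi(K_j)=\bigcap_{j=1}^{s}K_j=N.
\end{equation}
Since $\psi(N)=N$ for every $\psi\in\Aut(F)$, the subgroup $N$ is characteristic (in particular $\varphi(N)\subseteq N$ for all $\varphi\in\Aut(F)$, as required), and by the remark preceding the lemma it is normal.

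The two facts I would need to invoke explicitly are: (i) $F$ has only finitely many subgroups of a given finite index, and (ii) an automorphism preserves the index of a subgroup. Fact (ii) is immediate from $\psi$ being an isomorphism. Fact (i) is the only real input; for a free group of finite rank $n$ it follows, for instance, from the fact that subgroups of index $m$ correspond to conjugacy classes of transitive permutation representations $F\to S_m$, of which there are finitely many since $F$ is finitely generated and $S_m$ is finite. The main obstacle, if any, is simply making sure the index bound is applied uniformly: I should take the intersection over \emph{all} subgroups of index exactly $[F:H]$ (equivalently, over a $\psi$-invariant family such as all subgroups of index at most $m$) so that the collection is genuinely closed under the $\Aut(F)$-action, which is what makes \eqref{eq:char-inv} go through. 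With that care, the finiteness of the index of $N$ and its characteristic property both follow immediately, completing the proof.
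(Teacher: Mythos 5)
Your proof is correct and takes essentially the same route as the paper: the paper intersects the $\Aut(F)$-orbit $\bigcap_{\psi\in\Aut(F)}\psi(H)$, which is a finite intersection by the very finiteness fact you invoke, whereas you intersect the slightly larger (but equally $\Aut(F)$-invariant) family of all subgroups of index at most $[F:H]$. Both arguments rest on the same two ingredients --- automorphisms preserve index, and a finitely generated group has only finitely many subgroups of a given finite index --- so the proofs are interchangeable.
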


\begin{proof}
Let $[F:H] = n <\infty$. Then $[F:\varphi(H)] = n$ for any $\varphi\in\Aut(F)$. We know that for a finitely generated group, it has a finite number of subgroups of index $n$
 for any $n\in\MN$.
The number of subgroups of index $n$ in $F$ is finite.
Therefore
$$
N = \bigcap_{\varphi\in\Aut(F)} \varphi(H)
$$
has finite index, is characteristic, and is contained in $H$.
\end{proof}

\begin{proposition}
If $\varphi\in\Aut(H)$ extends to an automorphism of $F$, 
then $\varphi$ is normalizable.
\end{proposition}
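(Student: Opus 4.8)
The plan is to reduce the statement to Lemma~\ref{le:characteristic}, which supplies a characteristic finite-index subgroup sitting inside $H$. The key observation is that a characteristic subgroup of $F$ is preserved by \emph{every} automorphism of $F$, and in particular by any automorphism extending $\varphi$. So once such a subgroup is produced, $\varphi$-stability comes essentially for free.

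First I would apply Lemma~\ref{le:characteristic} to the finite-index subgroup $H$ to obtain a subgroup $N$ that is characteristic in $F$, of finite index, and contained in $H$. Since every characteristic subgroup is normal, this already gives $N\unlhd F$ and $[F:N]<\infty$, covering the first two of the four defining conditions of a normalizing subgroup listed in Section~\ref{se:subsectionnormalizable}.

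Next, let $\tilde\varphi\in\Aut(F)$ be an automorphism extending $\varphi$, so that $\tilde\varphi|_H=\varphi$. Because $N$ is characteristic in $F$, we have $\tilde\varphi(N)\subseteq N$. As $N\le H$ and $\tilde\varphi$ agrees with $\varphi$ on $H$, it follows that $\varphi(N)=\tilde\varphi(N)\subseteq N$; that is, $N$ is $\varphi$-stable. Hence $N$ is a $\varphi$-stable normal subgroup of $F$ of finite index, so $N\in L_\varphi^\ast$ and therefore $N\le M_\varphi$. Since $N$ has finite index in $F$, so does $M_\varphi$, which is precisely the condition for $\varphi$ to be normalizable.

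I do not anticipate a genuine obstacle here: the substantive content is already packaged in Lemma~\ref{le:characteristic}, and what remains is a short verification. The only point requiring care is to use that $N$ lies inside $H$, so that the restriction $\tilde\varphi|_N$ coincides with $\varphi|_N$ and the characteristic property of $N$ translates directly into $\varphi$-stability rather than a statement about the ambient $\tilde\varphi$ alone.
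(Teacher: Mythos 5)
Your proof is correct and takes the same approach as the paper: the paper's entire proof is the one-line invocation of Lemma~\ref{le:characteristic}, leaving implicit exactly the verification you spell out (that a characteristic $N\le H$ is preserved by the extension $\tilde\varphi$, hence $\varphi$-stable since $\tilde\varphi|_N=\varphi|_N$, so $M_\varphi$ has finite index). Your write-up is simply a more complete version of the paper's argument.
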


\begin{proof}
By Lemma \ref{le:characteristic}, $H$ contains 
a finite index characteristic subgroup $N$ of $F$.
\end{proof}

\subsection{$\WP(F\ast_\varphi t)$ is decidable in polynomial time when $\varphi$ is normalizable}

Suppose that $\varphi$ is normalizable and $N$ is a normal subgroup of $F$
of finite index satisfying $\varphi|_N \in \Aut(N)$. Denote $\varphi|_N$
by $\varphi^\ast$.

\subsubsection{Syllables}

Let $\Gamma^\ast=(V,E^\pm,r,\mu)$ be the subgroup graph for $N$.
The graph $\Gamma^\ast$ is the Cayley graph of the finite group $F/N$,
which induces a natural group operation on its vertex set $V$.
Moreover, since $N\le H\le F$, it follows that $H/N\le F/N$.
Hence, for $v\in V$ we write $v\in H$ 
whenever the coset corresponding to $v$ belongs to $H/N$.
In particular, we have $r\in H$.

As in Section~\ref{se:sbgp-basis}, fix a spanning tree
$T\subseteq E^+$ in $\Gamma^\ast$.
As in Section~\ref{se:case1}, we work with paths over $\Gamma^\ast$.
In this case, however, each path is represented by a pair $(P, v)$, 
where $P$ is an SLP over the alphabet $E^\pm$ and $v \in V$,
subject to the following condition:
\begin{itemize}
\item 
$\val(P)$ is a circuit in $\Gamma^\ast$ based at $r$.
\end{itemize}
We call such a pair a \emph{syllable}. Each syllable defines the path 
$$
p(P,v) = \val(P) [r,v]_T
$$
in $\Gamma^\ast$ that starts at $r$, ends at $v$, and which label is
$$
w(P,v) = \mu(\val(P)) \mu([r,v]_T).
$$

\subsubsection{Precomputed data}

Since the group $G=F\ast_\varphi t$ is fixed,
we assume that the following data is included in its description.
\begin{itemize}
\item 
The subgroup graph $\Gamma$ for $H$.
\item 
The subgroup graph $\Gamma^\ast=(V,E^\pm,r,\mu)$ for $N$.
\item 
A set of edges $T\subseteq E^+$ defining a
spanning tree in $\Gamma$ as described in Section \ref{se:sbgp-basis}.
\item 
The index $k=[F:N]$ (the same as the order of $\Gamma^\ast$).
\item 
For every $e\in E^+\setminus T$ we have
\begin{itemize}
\item 
the circuit $p_e$ in $\Gamma^\ast$ corresponding to $e$ defined by \eqref{eq:p_e};
\item 
the circuit $p_e^\varphi$ in $\Gamma^\ast$ satisfying 
$\mu(p_e^\varphi) = \varphi(\mu(p_e))$;
\item 
the circuit $p_e^{\varphi^{-1}}$ in $\Gamma^\ast$ satisfying
$\mu(p_e^{\varphi^{-1}}) = \varphi^{-1}(\mu(p_e))$;
\item 
an SLP $P_e^\varphi$ satisfying
$\val(P_e^\varphi) = p_e^\varphi$;
\item 
an SLP $P_{e^{-1}}^\varphi$ satisfying
$\val(P_{e^{-1}}^\varphi) = (p_e^\varphi)^{-1}$;
\item 
an SLP $P_e^{\varphi^{-1}}$ satisfying
$\val(P_e^{\varphi^{-1}}) = p_e^{\varphi^{-1}}$;
\item 
an SLP $P_{e^{-1}}^{\varphi^{-1}}$ satisfying
$\val(P_{e^{-1}}^{\varphi^{-1}}) = (p_e^{\varphi^{-1}})^{-1}$.
\end{itemize}
\item 
For every vertex $v$ such that $v\in H$ we have 
a syllable $(P_v^\varphi,v')$ satisfying
$$
w(P_v^\varphi,v') = \varphi(\mu([r,v]_T)),
$$
and a similar pair $(P_v^{\varphi^{-1}},v')$ for $\varphi^{-1}$.
\item
For every $v_1,v_2\in V$ we have an SLP $T_{v_1,v_2}$ constructed
by Lemma \ref{le:CP-w} satisfying 
$\val(T_{v_1,v_2}) = [v_1,v_2]_T$.
\item 
For every $v_1,v_2\in V$ we have an SLP
$C_{v_1,v_2} = S_{r,v_1}(T_{r,v_2}) \circ T_{v_1v_2,r}$, where
$v_1v_2$ in $T_{v_1v_2,r}$ is the product of $v_1$ and $v_2$ in $F/N$.
\end{itemize}
Define a constant 
\begin{align*}
C^\ast = &
\sum_{e\in E^+\setminus T}
(|P_e^\varphi|+
|P_{e^{-1}}^\varphi|+
|P_e^{\varphi^{-1}}|+
|P_{e^{-1}}^{\varphi^{-1}}|) \\
&+\sum_{v\in V} (|P_v^\varphi|+|P_v^{\varphi^{-1}}|) \\
&+2\sum_{v\in V} |T_{r,v}| + 2\sum_{v_1,v_2\in V} |C_{v_1,v_2}| + 7.
\end{align*}

\subsubsection{Data representation for a given word $w$}

A given word  \eqref{eq:w} is translated into an alternating sequence
\begin{equation}\label{eq:slp-word2}
(P_0,v_{0}), t^{\varepsilon_1},
(P_1,v_{1}), \dots,
(P_{k-1},v_{k-1}), t^{\varepsilon_k},
(P_k,v_{k})
\end{equation}
of letters $t^{\pm 1}$ and syllables $(P_i,v_i)$ satisfying
$
w(P_i,v_i) = w_i.
$
The sequence \eqref{eq:slp-word2} is computed by applying 
Lemma \ref{le:CP-w2} to $w_0,\dots,w_k$.

\begin{lemma}\label{le:CP-w2}
For a given $w\in F$, it requires $O(|w|)$ time to construct
a syllable $(P,v)$ satisfying $w(P,v) = w$.
\end{lemma}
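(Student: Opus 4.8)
The plan is to mimic the construction in Lemma~\ref{le:CP-w}, but instead of producing a raw path for $w$, I must produce a \emph{circuit} $\val(P)$ based at $r$ together with the terminal vertex $v$ so that $w(P,v) = \mu(\val(P))\,\mu([r,v]_T) = w$. The key observation is that in the graph $\Gamma^\ast = (V,E^\pm,r,\mu)$, which is $X$-regular (being the Cayley graph of the finite group $F/N$), every letter $x\in X^\pm$ read at any vertex determines a unique edge. So reading $w = x_1^{\varepsilon_1}\cdots x_m^{\varepsilon_m}$ starting from $r$ traces out a unique path $p$ from $r$ to some vertex $v$, namely the vertex corresponding to the coset $Nw$. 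This $v$ is exactly the terminal vertex I want, and it can be computed letter-by-letter in $O(|w|)$ time by following the transition function of $\Gamma^\ast$.

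First I would compute $v$ by walking the word $w$ through $\Gamma^\ast$ from $r$, obtaining both the terminal vertex $v$ and the sequence of edges $e_1,\dots,e_m$ traversed. Next I would build an SLP $Q$ over $E^\pm$ with $\val(Q) = e_1\cdots e_m$ exactly as in Lemma~\ref{le:CP-w}, which takes $O(|w|)$ time. The issue is that $\val(Q)$ is a path from $r$ to $v$, not a circuit based at $r$. To correct this I would set
$$
P = Q \circ \overline{T_{r,v}},
$$
where the second factor is an SLP whose value is the reverse path $[v,r]_T$ (available from the precomputed $T_{v,r}$, or constructed via Lemma~\ref{le:CP-w} from the tree path, in $O(|V|)=O(1)$ time since $\Gamma^\ast$ is fixed). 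Then $\val(P)$ runs from $r$ to $v$ and back to $r$ along the tree, so it is a circuit based at $r$, and its label is $\mu(\val(Q))\,\mu([v,r]_T) = w\cdot\mu([v,r]_T)$. Consequently
$$
w(P,v) = \mu(\val(P))\,\mu([r,v]_T) = w\cdot\mu([v,r]_T)\cdot\mu([r,v]_T) =_F w,
$$
since $\mu([v,r]_T)\mu([r,v]_T)$ reduces to the empty word in $F$.

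The concatenation $Q\circ \overline{T_{r,v}}$ adds one non-terminal and an $O(1)$-sized block (as $|T_{r,v}|$ is bounded by a constant depending only on the fixed graph $\Gamma^\ast$), so the total construction time and the size of $P$ remain $O(|w|)$, giving the claimed bound. The step requiring the most care is verifying that $\val(P)$ is genuinely a \emph{connected} circuit based at $r$: the path $\val(Q)$ ends at $v = t(\last(Q))$ and the appended tree path $[v,r]_T$ starts at $v$, so the connectedness condition $t(\last(Q)) = o(\first(\overline{T_{r,v}}))$ holds automatically by our choice of $v$, and the whole value closes up at $r$. I expect the only genuine subtlety to be the bookkeeping that the vertex $v$ returned by the forward walk is identical to the endpoint of $\val(Q)$, which follows because $\Gamma^\ast$ is folded and $X$-regular, so the labeled walk is deterministic and its endpoint is uniquely determined by $w$.
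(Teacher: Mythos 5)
Your proof is correct and takes essentially the same approach as the paper: walk $w$ through the $X$-regular graph $\Gamma^\ast$ from $r$ to find the endpoint $v$, close the path into a circuit using the tree path $[v,r]_T$, and build the SLP for that circuit via Lemma~\ref{le:CP-w}, so that $w(P,v)$ freely reduces to $w$. The only immaterial difference is that the paper constructs a single SLP for the whole circuit word $w\cdot\mu([v,r]_T)$ at once, whereas you build an SLP for the path of $w$ and then concatenate it with the (constant-size) tree-path SLP.
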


\begin{proof}
Let $v$ be the endpoint of the path labeled by $w$ in $\Gamma^\ast$ 
starting from $r$.
The word $w\cdot \mu([v,r]_T)$ labels a circuit $c$ in $\Gamma^\ast$,
and its length is bounded by $|w|+|\Gamma^\ast|$.
Using Lemma \ref{le:CP-w}, construct an SLP $P$ such that $\val(P)=c$.
The pair $(P,v)$ is a required syllable.
\end{proof}

\subsubsection{Application of $\varphi^{\pm1}$ to $(P,v)$}

Consider a syllable $(P,v)$. Obviously,
$$
w(P,v)\in H\ \ \Leftrightarrow\ \ v\in H.
$$
To apply $\varphi^\pm$ to $(P,v)$ means to compute a syllable $(P',v')$
satisfying $w(P',v')=\varphi^\pm(w(P,v))$.

\begin{proposition}\label{pr:apply-phi2}
There is an algorithm that
for a given syllable $(P,v)$, that satisfies $w(P,v)\in H$,
produces a syllable $(P',v')$ in $O(|P|)$ time
satisfying the following conditions:
\begin{itemize}
\item[(a)]
$w(P',v')=\varphi(w(P,v))$,
\item[(b)]
$|P'|\le |P| + \sum_{e\in E^+\setminus T}
(|P_e^\varphi|+
|P_{e^{-1}}^\varphi|) 
+|P_v^\varphi|+1$.
\end{itemize}
A similar statement holds for $\varphi^{-1}$.
\end{proposition}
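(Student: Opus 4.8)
The plan is to exploit the decomposition $w(P,v) = \mu(\val(P))\cdot\mu([r,v]_T)$ that is built into the syllable representation. Since $\val(P)$ is a circuit in $\Gamma^\ast$ based at $r$, its label lies in $N=L(\Gamma^\ast)$, whereas the hypothesis $w(P,v)\in H$ (equivalently $v\in H$) guarantees that $\mu([r,v]_T)\in H$ as well. Because $\varphi$ is a homomorphism on $H$ and both factors lie in $H$, I would first rewrite
\begin{equation*}
\varphi(w(P,v)) = \varphi(\mu(\val(P)))\cdot\varphi(\mu([r,v]_T)),
\end{equation*}
noting that on the first factor $\varphi$ acts as $\varphi^\ast=\varphi|_N\in\Aut(N)$. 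This reduces the task to computing each factor separately and then recombining them into a single syllable.

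For the first factor I would apply the verbatim analogue of Proposition \ref{pr:apply-phi} to the graph $\Gamma^\ast$; this is legitimate because $N\unlhd F$ is of finite index, so $\Gamma^\ast$ is $X$-regular and self-similar, and the requisite SLPs $P_e^\varphi,P_{e^{-1}}^\varphi$ over $\Gamma^\ast$ are precomputed. This produces in $O(|P|)$ time an SLP $\tilde P$ with $\val(\tilde P)$ a circuit based at $r$, with $\mu(\val(\tilde P))=\varphi(\mu(\val(P)))$, and with $|\tilde P|\le |P|+\sum_{e\in E^+\setminus T}(|P_e^\varphi|+|P_{e^{-1}}^\varphi|)$. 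For the second factor there is nothing to compute: the precomputed syllable $(P_v^\varphi,v')$ already satisfies $w(P_v^\varphi,v')=\varphi(\mu([r,v]_T))$, and by definition $\val(P_v^\varphi)$ is a circuit based at $r$.

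I would then set $P'=\tilde P\circ P_v^\varphi$ and take $v'$ to be the endpoint recorded in the precomputed syllable. The crucial point — and the only place where one must be slightly careful — is that the resulting pair is a genuine syllable: because $\val(\tilde P)$ and $\val(P_v^\varphi)$ are both circuits based at $r$, their concatenation is again a circuit based at $r$, so no shift operators are needed and the continuity issues that arose in Proposition \ref{pr:main-step-case-I} do not appear here. Unwinding the labels gives
\begin{equation*}
w(P',v') = \mu(\val(\tilde P))\cdot\mu(\val(P_v^\varphi))\cdot\mu([r,v']_T) = \varphi(\mu(\val(P)))\cdot w(P_v^\varphi,v') = \varphi(w(P,v)),
\end{equation*}
establishing (a). The size bound (b) follows since concatenation adds a single non-terminal: $|P'|=|\tilde P|+|P_v^\varphi|+1\le |P|+\sum_{e\in E^+\setminus T}(|P_e^\varphi|+|P_{e^{-1}}^\varphi|)+|P_v^\varphi|+1$. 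The total running time is $O(|P|)$, dominated by the application of Proposition \ref{pr:apply-phi}, since attaching the fixed precomputed SLP $P_v^\varphi$ and forming the concatenation cost only $O(1)$ under Assumption-II. The statement for $\varphi^{-1}$ follows identically, using the precomputed data $P_e^{\varphi^{-1}},P_{e^{-1}}^{\varphi^{-1}}$ and the syllable $(P_v^{\varphi^{-1}},v')$. The main difficulty is conceptual rather than computational: recognizing that the homomorphism property of $\varphi$ on $H$ lets one split off the coset-representative tail $\mu([r,v]_T)$ — whose image under $\varphi$ is precomputed — from the normal part $\mu(\val(P))$, to which the machinery of Section \ref{se:case1} applies directly.
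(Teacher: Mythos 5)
Your proof is correct and follows essentially the same route as the paper's: apply Proposition \ref{pr:apply-phi} to the circuit part $\val(P)$ (legitimate since $\mu(\val(P))\in N$), append the precomputed syllable $(P_v^\varphi,v')$ for the tail $\mu([r,v]_T)$, and concatenate, yielding the same size bound. Your added justification that both pieces are circuits based at $r$ (so no shift operators are needed) is a detail the paper leaves implicit, but it does not change the argument.
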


\begin{proof}
Since $\mu(\val(P))\in N$, we can 
process $P$ using Proposition \ref{pr:apply-phi} and
denote the result by $P_1$.
That increases
the number of non-terminals by at most 
$\sum_{e\in E^+\setminus T}
(|P_e^\varphi|+ |P_{e^{-1}}^\varphi|)$.
Then use the precomputed pair $(P_v^{\varphi},v')$ 
defining $\varphi(\mu([r,v]_T))$ and
concatenate $P_1$ and $P_v^{\varphi}$ to get $P'$.
That adds $|P_v^\varphi|+1$ non-terminals.
Clearly, $(P',v')$ is a required syllable.
\end{proof}

\subsubsection{Syllable concatenation}

To \emph{concatenate} syllables $(P_1,v_1)$ and $(P_2,v_2)$ means
to construct a syllable $(P,v)$ satisfying
\begin{equation}\label{eq:syllable-concat-prop}
w(P_1,v_1)w(P_2,v_2) = w(P,v).
\end{equation}
One way to construct a required pair $(P,v)$ is to 
shift $p(P_2,v_2)$ in $\Gamma^\ast$ so that its origin is $v_1$
and attach the result to $(P_1,v_1)$,
see Figure \ref{fi:syllable-concatenation}, and then construct 
a proper syllable.
\begin{figure}[h]
\centering
\includegraphics[scale=0.8]{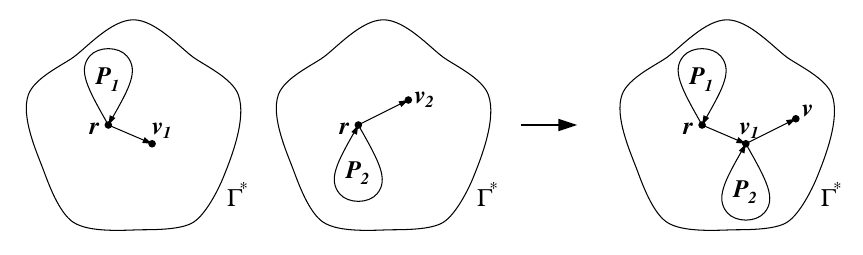}
\caption{Concatenation of the paths $p(P_1,v_1)$ and $p(P_2,v_2)$.}
\label{fi:syllable-concatenation}
\end{figure}
In the next proposition we prove that the syllable 
\begin{equation}\label{eq:syllable-concat}
(P,v) = (P_1 \circ T_{r,v_1} \circ S_{r,v_1}(P_2) \circ 
\underbrace{S_{r,v_1}(T_{r,v_2}) \circ T_{v_1v_2,r}}_{C_{v_1,v_2}},
v_1v_2)
\end{equation}
defines concatenation of $(P_1,v_1)$ and $(P_2,v_2)$.
We denote $(P,v)$ defined in \eqref{eq:syllable-concat} by
$(P_1,v_1) \circ (P_2,v_2)$.

\begin{proposition}\label{pr:syllable-concat}
$(P,v) = (P_1,v_1) \circ (P_2,v_2)$
satisfies \eqref{eq:syllable-concat-prop}.
Constructing $(P,v)$ requires $O(|P_1|+|P_2|)$ time. Moreover,
$$
|P|\le |P_1| + |P_2| + |T_{r,v_1}| + |C_{v_1,v_2}| + 3.
$$
\end{proposition}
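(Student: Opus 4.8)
The plan is to verify the three assertions of Proposition~\ref{pr:syllable-concat} in turn: first the correctness identity~\eqref{eq:syllable-concat-prop}, then the size bound, and finally the time bound. The correctness identity is the mathematical heart; the two complexity claims will follow almost mechanically from the construction~\eqref{eq:syllable-concat} together with the cost accounting already established for SLP concatenation and shift operators.

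**The correctness identity.**

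First I would unwind the definition of $w(P,v)$ for the syllable displayed in~\eqref{eq:syllable-concat}. By definition $w(P,v) = \mu(\val(P))\,\mu([r,v]_T)$ with $v = v_1 v_2$. The key observation is that the four constituent blocks of $P$ are chosen precisely so that their labels, read consecutively, spell out $w(P_1,v_1)\,w(P_2,v_2)$ once we append the tree path $\mu([r,v_1v_2]_T)$. Concretely, I would compute $\mu(\val(P))$ as the concatenation
$$
\mu(\val(P_1))\cdot \mu(\val(T_{r,v_1}))\cdot \mu(S_{r,v_1}(\val(P_2)))\cdot \mu(S_{r,v_1}(\val(T_{r,v_2})))\cdot \mu(\val(T_{v_1v_2,r})),
$$
and then use that shift operators preserve labels (the remark just before Proposition~\ref{pr:Suv-SLP}) to replace $\mu(S_{r,v_1}(\val(P_2)))$ by $\mu(\val(P_2))$ and $\mu(S_{r,v_1}(\val(T_{r,v_2})))$ by $\mu([r,v_2]_T)$. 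After appending $\mu([r,v_1v_2]_T)$ and recalling that $\mu([v_1v_2,r]_T) = \mu([r,v_1v_2]_T)^{-1}$, the two tree-path segments $\mu([v_1v_2,r]_T)$ and $\mu([r,v_1v_2]_T)$ cancel, leaving exactly
$$
\mu(\val(P_1))\,\mu([r,v_1]_T)\cdot \mu(\val(P_2))\,\mu([r,v_2]_T) = w(P_1,v_1)\,w(P_2,v_2),
$$
which is~\eqref{eq:syllable-concat-prop}. I also need to confirm the syllable is well-formed, i.e.\ that $\val(P)$ is a circuit based at $r$; this holds because the inserted blocks $T_{r,v_1}$, $S_{r,v_1}(T_{r,v_2})$, and $T_{v_1v_2,r}$ bridge the discontinuities between the paths $\val(P_1)$, $S_{r,v_1}(\val(P_2))$, closing up at $v_1v_2$ and then returning to $r$. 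I expect this \emph{path-continuity bookkeeping} to be the main obstacle: one must track the origin and terminus of each block and check that consecutive blocks meet, using that $S_{r,v_1}$ sends a path starting at $r$ to one starting at $v_1$, and that the Cayley-graph structure on $V$ makes $v_1$ followed by the $v_2$-translate terminate at $v_1 v_2$.

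**The size and time bounds.**

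Finally the quantitative claims follow by inspection of~\eqref{eq:syllable-concat}. The SLP $P$ is a five-fold concatenation $P_1 \circ T_{r,v_1} \circ S_{r,v_1}(P_2) \circ C_{v_1,v_2}$, where $C_{v_1,v_2}$ is itself precomputed. Each $\circ$ introduces exactly one new non-terminal, and applying $S_{r,v_1}$ to $P_2$ leaves $|S_{r,v_1}(P_2)| = |P_2|$ by Proposition~\ref{pr:Suv-SLP}. Summing the sizes of the pieces and the three concatenation symbols gives
$$
|P| \le |P_1| + |T_{r,v_1}| + |P_2| + |C_{v_1,v_2}| + 3,
$$
which is the stated bound. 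For the time bound, constructing $S_{r,v_1}(P_2)$ takes $O(|P_2|)$ time by Proposition~\ref{pr:Suv-SLP}, the tree blocks $T_{r,v_1}$ and $C_{v_1,v_2}$ are precomputed and hence retrieved in constant time relative to the input, and each concatenation costs $O(1)$ by Assumption-II, so the whole construction runs in $O(|P_1| + |P_2|)$ time.
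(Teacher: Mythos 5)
Your proposal is correct and follows essentially the same route as the paper's proof: both rest on the facts that shift operators preserve labels, that the tree-path blocks bridge the discontinuities so that $\val(P)$ is a circuit at $r$ followed by $[r,v_1v_2]_T$, and that the pair $[v_1v_2,r]_T\,[r,v_1v_2]_T$ cancels (you append and cancel it, the paper inserts it — the same computation read in opposite directions). The complexity accounting (three new non-terminals, $O(|P_1|+|P_2|)$ time via Proposition~\ref{pr:Suv-SLP} and constant-size precomputed blocks) matches the paper exactly.
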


\begin{proof}
By definition,
\begin{align*}
w(P_1,v_1) = \mu(\val(P_1)) \mu([r,v_1]_T) 
&= 
\mu(\val(P_1)) \mu(\val(T_{r,v_1})),\\
w(P_2,v_2) = \mu(\val(P_2)) \mu([r,v_2]_T) 
&= \mu(\val(P_2)) \mu(\val(T_{r,v_2}))\\
&= \mu(\val(S_{r,v_1}(P_2))) \mu(S_{r,v_1}(T_{r,v_2})),
\end{align*}
because an application of $S_{r,v_1}$ does not change the labels.
Hence, 
$$
w(P_1,v_1)w(P_2,v_2) =
\mu(\val(P_1 \circ T_{r,v_1} \circ S_{r,v_1}(P_2) \circ S_{r,v_1}(T_{r,v_2}))).
$$
Notice that 
$\val(P_1 \circ T_{r,v_1} \circ S_{r,v_1}(P_2) \circ S_{r,v_1}(T_{r,v_2}))$
is a path
in $\Gamma^\ast$ from $r$ to $v_1v_2$.
Since the path $\val(T_{v_1v_2,r} \circ T_{r,v_1v_2})$
freely reduces to $\varepsilon$ as an element of $F(E)$, we have
$$
w(P_1,v_1)w(P_2,v_2) =
\mu(\val(P_1 \circ 
T_{r,v_1} \circ
S_{r,v_1}(P_2) \circ
S_{r,v_1}(T_{r,v_2}) \circ
T_{v_1v_2,r} \circ
T_{r,v_1v_2})),
$$
where $\val(P_1 \circ 
T_{r,v_1} \circ
S_{r,v_1}(P_2) \circ
S_{r,v_1}(T_{r,v_2}) \circ
T_{v_1v_2,r})$ 
is a circuit based at $r$ and 
$\val(T_{r,v_1v_2}) = [r,v_1v_2]_T$.
Thus, $(P_1 \circ 
T_{r,v_1} \circ
S_{r,v_1}(P_2) \circ
S_{r,v_1}(T_{r,v_2}) \circ
T_{v_1v_2,r},v_1v_2)$ satisfies \eqref{eq:syllable-concat-prop}.

We have $P = P_1 \circ T_{r,v_1} \circ S_{r,v_1}(P_2) \circ C_{v_1,v_2}$,
by definition of $C_{v_1,v_2}$.
By Proposition \ref{pr:Suv-SLP}, $S_{r,v_1}(P_2)$
can be computed in $O(|P_2|)$ time and 
satisfies $|S_{r,v_1}(P_2)| = |P_2|$.
Then concatenation 
$P_1 \circ T_{r,v_1} \circ S_{r,v_1}(P_2) \circ C_{v_1,v_2}$
can be computed in $O(|P_1|+|P_2|)$ time because
$T_{r,v_1}$ and $C_{v_1,v_2}$ are of constant size.
The size of concatenation is bounded by the sum of sizes
$|P_1| + |P_2| + |T_{r,v_1}| + |C_{v_1,v_2}|$ plus three
additional non-terminals.
\end{proof}

\subsubsection{The algorithm}

\begin{proposition}\label{pr:main-step-case-II}
Consider a segment $(P_{i-1},v_{i-1}),t^{-1},(P_i,v_i),t,(P_{i+1},v_{i+1})$ 
in \eqref{eq:slp-word2}.
There is an algorithm that in $O(|P_{i-1}|+|P_{i}|+|P_{i+1}|)$ time
verifies if $w(P_i,v_i) \in H$ and, if so, constructs
a syllable $(P,v)$ satisfying the following properties:
\begin{itemize}
\item[(a)]
$w(P,v) =_G
w(P_{i-1},v_{i-1})
\cdot
t^{-1} w(P_{i},v_{i}) t
\cdot
w(P_{i+1},v_{i+1})$,
\item[(b)]
$|P| \le |P_{i-1}| + |P_i| + |P_{i+1}| + C^\ast$.
\end{itemize}
The same holds for segments 
$(P_{i-1},v_{i-1}),t,(P_i,v_i),t^{-1},(P_{i+1},v_{i+1})$.
\end{proposition}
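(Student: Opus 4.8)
The plan is to mirror the proof of Proposition~\ref{pr:main-step-case-I}, replacing bare SLPs by syllables and replacing the two primitive operations used there---application of $\varphi^{\pm1}$ and concatenation---by their syllable analogues, Propositions~\ref{pr:apply-phi2} and~\ref{pr:syllable-concat}. First I would decide membership $w(P_i,v_i)\in H$. By the observation preceding the proposition this is equivalent to $v_i\in H$, i.e.\ to whether the coset labelled by $v_i$ lies in $H/N$; since $V$ together with the $H/N$-membership of its vertices is precomputed data attached to $\Gamma^\ast$, this is an $O(1)$ table lookup that never inspects the circuit $\val(P_i)$. This is the conceptual point that makes the normalizable case work: applicability of a Britton reduction depends only on the vertex $v_i$, not on the (possibly exponentially long) path it anchors.

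If $v_i\in H$, I would perform the reduction $t^{-1}w(P_i,v_i)t=_G\varphi(w(P_i,v_i))$: apply Proposition~\ref{pr:apply-phi2} to $(P_i,v_i)$ to obtain a syllable $(P_i',v_i')$ with $w(P_i',v_i')=\varphi(w(P_i,v_i))$ in $O(|P_i|)$ time, and then form
$$
(P,v)=(P_{i-1},v_{i-1})\circ(P_i',v_i')\circ(P_{i+1},v_{i+1})
$$
by two applications of Proposition~\ref{pr:syllable-concat}. Property~(a) is immediate, since $\circ$ realises the product of the represented elements and $w(P_i',v_i')=\varphi(w(P_i,v_i))=_G t^{-1}w(P_i,v_i)t$ in $G$. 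The running time is $O(|P_{i-1}|+|P_i|+|P_{i+1}|)$ because each of the three operations is linear in its input sizes and $|P_i'|=O(|P_i|)$.

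The only substantive work is the size bound~(b), which is the bookkeeping step I expect to be the most delicate. By Proposition~\ref{pr:apply-phi2} the application of $\varphi$ contributes at most $\sum_{e\in E^+\setminus T}(|P_e^\varphi|+|P_{e^{-1}}^\varphi|)+|P_{v_i}^\varphi|+1$ non-terminals, and by Proposition~\ref{pr:syllable-concat} each of the two concatenations contributes a term of the form $|T_{r,v}|+|C_{v,v'}|+3$ for appropriate vertices $v,v'$. I would bound $|P_{v_i}^\varphi|$ by $\sum_{v\in V}|P_v^\varphi|$, every $|T_{r,v}|$ term by $\sum_{v\in V}|T_{r,v}|$, and every $|C_{v,v'}|$ term by $\sum_{v_1,v_2\in V}|C_{v_1,v_2}|$; summing the two concatenation contributions then produces the coefficients $2$ on $\sum_{v}|T_{r,v}|$ and on $\sum_{v_1,v_2}|C_{v_1,v_2}|$ that appear in $C^\ast$, while the additive constants $1+3+3=7$ match the $+7$ there. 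Since the $\varphi$-terms are dominated by the full four-fold sum over $e$ and the two-fold sum over $v$ in the definition of $C^\ast$, the total overhead above $|P_{i-1}|+|P_i|+|P_{i+1}|$ is at most $C^\ast$, giving~(b).

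For the mirror segment $(P_{i-1},v_{i-1}),t,(P_i,v_i),t^{-1},(P_{i+1},v_{i+1})$ the identical argument applies with $\varphi$ replaced by $\varphi^{-1}$, using the precomputed $\varphi^{-1}$-data $P_e^{\varphi^{-1}},P_{e^{-1}}^{\varphi^{-1}},P_v^{\varphi^{-1}}$; this is exactly why $C^\ast$ was defined to contain both families of terms, so that a single constant bounds the overhead in either case.
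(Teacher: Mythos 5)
Your proposal is correct and follows essentially the same route as the paper's own proof: the $O(1)$ membership test via $v_i\in H/N$, one application of Proposition~\ref{pr:apply-phi2}, two applications of Proposition~\ref{pr:syllable-concat}, and the same bookkeeping (including the $1+3+3=7$ accounting) to absorb the overhead into $C^\ast$. No gaps.
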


\begin{proof}
To verify if $w(P_i,v_i) \in H$ it is sufficient to check if $v_i$, 
viewed as an element of $F/N$, belongs to $H/N$.
This can be precomputed for $\Gamma^\ast$, and hence can be checked in 
$O(1)$ time. Suppose that it is the case.

Using Proposition \ref{pr:apply-phi2}, one can compute in $O(|P_i|)$ time
a syllable $(P',v')$ satisfying
$w(P',v') =_F \varphi(w(P_i,v_i)) =_G t^{-1} w(P_{i},v_{i}) t$.
Then, using Proposition \ref{pr:syllable-concat} twice, one can compute 
in $O(|P_{i-1}|+|P_{i}|+|P_{i+1}|)$ time
concatenation $(P,v)$ of 
three syllables $(P_{i-1},v_{i-1})$, $(P',v')$, 
and $(P_{i+1},v_{i+1})$.
Now, (a) follows from the construction of $(P,v)$.
Furthermore,
\begin{align*}
|P| \le& |P_{i-1}| + |P'| + |P_{i+1}| +
2\bigg(\sum_{v} |T_{r,v}| + \sum_{v_1,v_2} |C_{v_1,v_2}|+ 3\bigg)
&&\mbox{(Proposition \ref{pr:syllable-concat}(b))}\\
\le & |P_{i-1}| + |P_i| + |P_{i+1}| +
2\bigg(\sum_{v} |T_{r,v}| + \sum_{v_1,v_2} |C_{v_1,v_2}|+ 3\bigg)\\
&+\sum_{e\in E^+\setminus T} (|P_e^\varphi|+ |P_{e^{-1}}^\varphi|) 
+\sum_v |P_v^\varphi|+1
&&\mbox{(Proposition \ref{pr:apply-phi2}(b))}\\
\le& |P_{i-1}| + |P_i| + |P_{i+1}| + C^\ast
\end{align*}
and (b) holds.
\end{proof}



\begin{theorem}\label{th:main-case2}
Suppose that $H$ is a subgroup of $F$ of finite index 
and let $\varphi:H\to H$ be a normalizable isomorphism.
Then the word problem for the HNN extension $F\ast_\varphi t$
is decidable in polynomial time.
\end{theorem}

\begin{proof}
The proof is the same as the proof of Theorem \ref{th:main-case1},
but instead of Proposition \ref{pr:main-step-case-I}
we use Proposition \ref{pr:main-step-case-II} 
for a single Britton reduction step.
\end{proof}

\bibliography{main_bibliography}

\end{document}